\NewDocumentCommand{\mybar}{ O{0.860} O{0.3pt} m }{%  <---- Set the default values here
    \mathrlap{\hspace{#2}\overline{\scalebox{#1}[1]{\phantom{\ensuremath{#3}}}}}\ensuremath{#3}
}
\newcommand{\cF}{\mathcal{F}}
\newcommand{\cG}{\mathcal{G}}
\newcommand{\cK}{\mathcal{K}}
\newcommand{\cL}{\mathcal{L}}
\newcommand{\cM}{\mathcal{M}}
\newcommand{\cP}{\mathcal{P}}
\newcommand{\cR}{\mathcal{R}}
\newcommand{\scP}{\mathscr{P}}
\newcommand{\scR}{\mathscr{R}}
\renewcommand{\pod}[1]{\allowbreak\mathchoice
  {\if@display \mkern 18mu\else \mkern 2mu\fi (#1)}
  {\if@display \mkern 6mu\else \mkern 2mu\fi (#1)}
  {\mkern4mu(#1)}
  {\mkern4mu(#1)}
}
\newcommand{\legendre}[2][p]{\ensuremath{\left( \frac{#2}{#1} \right) }}
\newcommand{\sdfrac}[2]{\mbox{\small$\displaystyle\frac{#1}{#2}$}}
\newcommand{\length}{\operatorname{length}}
\newcommand{\RmK}{\cR_K^{{|\cdot|}}}
\newcommand{\PG}{\operatorname{PG}}
\newcommand{\dM}{\operatorname{d_M}}
\newcommand{\dE}{\operatorname{d_E}}
\newcommand{\dP}{\operatorname{d_{pc}}}
\newcommand{\scBpc}{\mathscr{B}_{\mathrm{pc}}}
\newcommand{\dens}{\operatorname{\delta}}
\newcommand{\bigabs}[1]{\big\lvert #1 \big\rvert}
\newcommand{\ZZ}{\mathbb{Z}}
\theoremstyle{plain}
\newtheorem{theorem}{Theorem}
\newtheorem{lemma}{Lemma}[section]
\newtheorem{proposition}{Proposition}[section]
\newtheorem{conjecture}{Conjecture}
\theoremstyle{remark}
\theoremstyle{definition}
\definecolor{orange}{rgb}{1,0.5,0}
\definecolor{Ggreen}{rgb}{0.,0.575,0.0128}
\definecolor{Bblue}{rgb}{0.016,.132,0.91}
\def\mysequence#1{\expandafter\@mysequence\csname c@#1\endcsname}
\def\@mysequence#1{%
  \ifcase#1\or left\or right\or altceva\else\@ctrerr\fi}
\begin{document}

\title[On the trajectories of a particle in a translation invariant involutive field]
{On the trajectories of a particle in a translation invariant involutive field}

% \title[On the trajectories of a particle in an involutive translation invariant field]
% {On the trajectories of a particle in an involutive translation invariant field}

% \title[On the discrete trajectories of a self-propelled particle]
% {On the discrete trajectories of a self-propelled particle}

% \author[Cristian Cobeli]{Cristian Cobeli\textsuperscript{*}}
\author[Cristian Cobeli]{Cristian Cobeli}
\address[Cristian Cobeli]{``Simion Stoilow'' Institute of Mathematics of the Romanian Academy,~21 Calea Grivitei Street, P. O. Box 1-764, Bucharest 014700, Romania}
\email{cristian.cobeli@imar.ro}

% \thanks{\textsuperscript{*}Corresponding author: Cristian Cobeli: \texttt{cristian.cobeli@gmail.com}}

\author{Alexandru Zaharescu}
\address[Alexandru Zaharescu]{Department of Mathematics, 
University of Illinois at Urbana-Champaign, 
1409 West Green Street, Urbana, IL 61801, USA,
% .}
%
% \address[Alexandru Zaharescu]{
and 
``Simion Stoilow'' Institute of Mathematics of the Romanian Academy,~21 
Calea Grivitei 
Street, P. O. Box 1-764, Bucharest 014700, Romania}
\email{zaharesc@illinois.edu}

\subjclass[2020]{Primary 11B37; Secondary 11B50}
% 52C20 Tilings in 2 dimensions (aspects of discrete geometry)
% 11B37 Recurrences {For applications to special functions, see 33-XX}
% 11B50 Sequences (mod m)
% 11B39 Fibonacci and Lucas numbers and polynomials and generalizations
% 11Y16 Number-theoretic algorithms; complexity
% \subjclass[2020]{Primary 11B37; Secondary 11B39, 11B50.}
% 11K36 Well-distributed sequences and other variations
% 11Bxx Sequences and sets
% 11B99 None of the above, but in this section

% 11B99 11B85
% 11Bxx Sequences and sets
% 11B99 None of the above, but in this section
% 05A05 Permutations, words, matrices
% 68R15 Combinatorics on words
% 11M41 Other Dirichlet series and zeta functions 

\thanks{Key words and phrases: 
% dynamical lozenge tiling with integers, discrete parabola trajectory, 
% rhombus number tiling, 
lattice points,
partition with parabolas,
modular prime covering, 
discrete trajectory,
parabolic-taxicab distance,
translation-invariant-involutive operator}

\begin{abstract}
We introduce a double-folded operator that,  upon iterative application,  generates a dynamical system with two types of trajectories: a cyclic one and, another that grows endlessly on parabolas. These trajectories produce two distinct partitions of the set of lattice points in the plane.

Our object is to analyze these trajectories and to point out
a few special arithmetic properties of the integers they represent.

We also introduce and study the parabolic-taxicab distance, which measures
the fast traveling on the  steps of the stairs defined by points on the parabolic trajectories whose coordinates are based on triangular numbers.
\end{abstract}
\maketitle

%%%%%%%%%%%%%%%%%%%%%%%%%%%%%%%%%%%%%%%%%%%%%%%%%

\vspace{-4mm}
\section{Introduction}

The iteration of a double-folded $3$-dimensional operator has led to an engaging 
partitioning of the $\ZZ\times\ZZ$ plane with L\"oschian numbers
(see~\cite[{Chap. 10}]{Loc1940}, \cite{KRNG2024}, \cite[{\href{https://oeis.org/A003136}{A003136}}]{oeis})
on a lozenge configuration.
Besides being simple geometric objects, lozenges appear in various contexts, of which some are related to complex combinatorial counting problems (see~\cite{CF2023,CL2019, Ciucu2009, Ciucu2005}), while others seemingly simpler like the modified version of the
$\sqrt{2} : 1$ ratio rhombus  that lead to the wonderful stable three-dimensional 
\textit{triamon}-\textit{bamboozle} structure with multidirectional tubes~\cite{VV2013}.
In a related manner, in this paper we will lower the dimension by defining a pair of two-dimensional operators to which we will impose the same requirements as in~\cite{BCZ2023b} to be involutions and %to be 
invariant under translations.
There are two types of such operators, which depend on the choice of two parameters.
We will see the interesting fact that these operators play a role analogous to the thresholds set by the 
orbital speed and the escape velocity in the movement of a particle in a gravitational field.
Thus, starting with a generic point and applying iteratively the first pair of operators 
results in a sequence of points that cycle on a satellite trajectory, 
while the repeated application of the second pair of operators generates 
a sequence of points confined on a parabolic trajectory.
The union of all these disjoint trajectories generates
two partitions of the set of lattice points $\ZZ\times\ZZ$
with twisted cycles in the first case and with discrete parabolas in the second.
%%%%%%%%%%%%%%%%%%%%%%%%%%%%
\subsection{Definitions and statements of the main results}
Let $\alpha$ and $ \beta$ be fixed integers and consider the two folded
% bilinear 
operator $\cF :=\{F',F''\}$, where
\begin{align}\label{eqF}
    F'(x,y)& = (\alpha x+\beta y+1,y),\qquad
    F''(x,y) = (x,\beta x+\alpha y+1).
\end{align}
The iteration of this operator, starting with different initial values, 
produces discrete dynamical systems that are essentially dependent 
on the parameter values $\alpha$ and $ \beta$.

Comparable procedures employed in closely related contexts also appear 
in other recently studied problems analyzed from various perspectives 
such as those that discuss 
the properties of numbers and patterns that appear in Pascal-like 
triangles~\cite{CZ2013, Pru2022},
the systems of numbers generated by Ducci-game-rules~\cite{CCZ2000, CPZ2016, CZ2014},
or the distribution of higher-order differences obtained through the 
iterative application of the $\PG$ operator in relation to the Proth-Gilbreath 
conjecture~\cite{BCZ2023, CZZ2013, Gil2011, Guy1988, Guy2004, Pro1878}.

Two properties that we require for operators $F'$ and 
$F''$ defined by~\eqref{eqF} are \textit{translation invariance} 
and the property of \textit{involution} (composition with itself equals the identity), 
as they yield intriguing trajectories from geometric and arithmetic perspectives.
% 
%%%%%%%%%%%%%%%%%%%%%%%%%%%%%%%%%%%%%%%%%%%%%%%%%
% \section{Notes 2D}
Accordingly, we introduce two 
% bilinear 
operators 
$\cK :=\{K',K''\}$ and
$\cL :=\{L',L''\}$, which are defined by
\begin{align}\label{eqK}
    K'(x,y)& = (-x+y+1,y),& K''(x,y)& = (x,x-y+1),\\
    L'(x,y)& = (-x+2y+1,y),& L''(x,y)& = (x,2x-y+1).
    \label{eqL}
\end{align}
Our object is to study the sequences of points formed by iterating 
these operators, the arithmetic and geometric properties of 
the patterns they generate, and the characteristics of the integers 
represented throughout the process.

Let $\scP_*(a,b)$ denote the set of all pairs obtained through iterations starting from the initial pair $(a,b)$, thus, in particular,
\begin{equation}\label{eqPKL}
    \scP_K(a,b) := \bigcup_{n\ge 0} K^{[n]}(a,b)\quad
    \text{ and }\quad
    \scP_L(a,b) := \bigcup_{n\ge 0} L^{[n]}(a,b)\,.
\end{equation}
(The superscript notation indicates the composition of $n$ operators of 
any type, with~a single prime or a double prime, taken from $\cF$, 
where, in particular, $\cF$ can be either $\cK$ or $\cL$.)

Denote the sets of \textit{represented integers} by
\begin{equation*}
   \begin{split}
    \cR_K(a,b) &:= \big\{ m \in \{x,y\} :  (x,y) \in\cP_K(a,b)\big\}\,,\\
    \cR_L(a,b) &:= \big\{ m \in \{x,y\} :  (x,y) \in\cP_L(a,b)\big\}\,.
   \end{split} 
\end{equation*}

\medskip
% \begin{remark}\label{RemarkPropGK} We have:

% \begin{enumerate}
    % \item[(1)]
We remark that all four operators $K',K'',L',L''$ are involutions
    % \item[(2)]
and the operators $L'$ and~$L''$ are invariant under translations
    $(x,y)\mapsto (x,y)+h:=(x+h,y+h)$, while $K'$ and $K''$ are not.
% \end{enumerate}
% \end{remark}
A few graphical representations of the $K$-generated cycles and the paths they induce are shown in Figures~\ref{FigureCycles0M},
\ref{FigureCycles-MM},
\ref{FigureCycles},
\ref{FigurePaths}.
%%%%%%%%%%%%%%%%%%%%%%%%%%%%%%%%%%%%%%%%%%%%%%%%%%%%%%%%%%%%
\begin{figure}[hbt]
 \centering
  \includegraphics[width=0.19\textwidth]{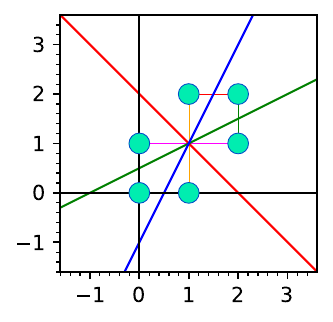}
  \includegraphics[width=0.19\textwidth]{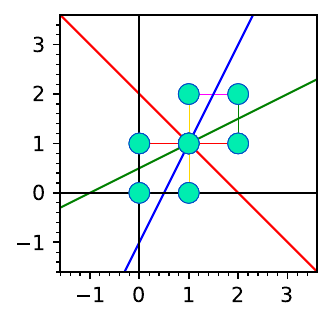}
  \includegraphics[width=0.19\textwidth]{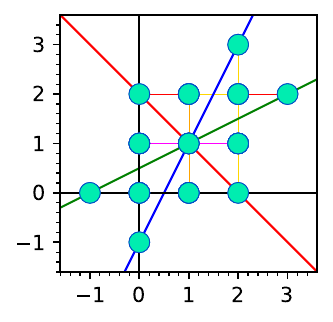}
  \includegraphics[width=0.19\textwidth]{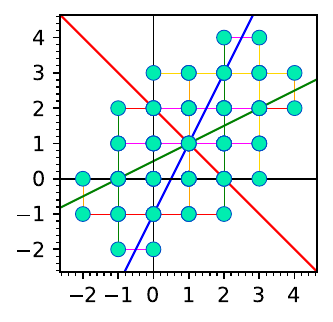}
  \includegraphics[width=0.19\textwidth]{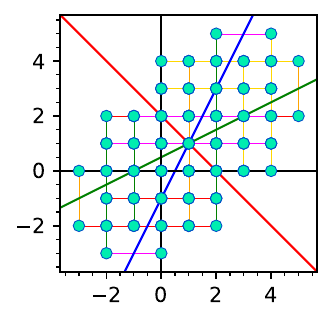}
\caption{
The smallest $K$-generated cycles by $(a,b)\in[0,T]^2$, with $T=0,1,2,3,4$.
}
 \label{FigureCycles0M}
 \end{figure}

The next theorem describes the cycles generated by the operators in $\cK$.
%%%%
\begin{theorem}\label{TheoremK}
  Let $a$ and $b$ be integers. Then the following hold.
\begin{itemize}
\item[(1)]
Starting with $(a,b)$ and applying the operators in 
$\cK=\{K', K''\}$ alternatively creates a cycle with six points.
If any of these points is located on the lines 
$y = -x+2$,  %RED
$2y = x+1$,  %GREEN
$y = 2x-1$,
then some of the cycle points overlap, forming a cycle with only three distinct points, unless $a=b=1$, in which case all six points coincide.

\item[(2)]
The union of all cycles forms a partition of the set of lattice points in the plane. The average Euclidean length of the paths connecting the six points of the cycles generated by the points $(a,b)$ with $-T\le a,b\le T$ is $\frac{17}{3}T+O(1)$.

\item[(3)]
There are infinitely many cycles whose coordinates represent
three squares or three cubes.
There exist cycles whose coordinates in absolute value are all prime numbers.
\end{itemize}
\end{theorem}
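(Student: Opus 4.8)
The plan is to handle the three parts in turn, using part~(1) as the structural backbone. For part~(1) I would first note that $K'$ and $K''$ are affine involutions: $K'$ fixes the line $y=2x-1$ pointwise and $K''$ fixes $2y=x+1$ pointwise, so each is an affine reflection. The decisive step is to compute $g:=K''\circ K'$, namely $g(x,y)=(-x+y+1,-x+2)$, whose linear part has characteristic polynomial $\lambda^2+\lambda+1$; hence $g$ has order three with unique fixed point $(1,1)$, and $\langle K',K''\rangle$ is a dihedral group of order six acting on $\ZZ\times\ZZ$. A direct alternating iteration from $(a,b)$ then yields the six points $(a,b)$, $(-a+b+1,b)$, $(-a+b+1,2-a)$, $(2-b,2-a)$, $(2-b,a-b+1)$, $(a,a-b+1)$ before closing up. The degeneracies now follow from orbit--stabilizer: the three reflection axes are precisely $y=-x+2$, $2y=x+1$, $y=2x-1$; a point off all three has trivial stabilizer (six distinct points), a point on exactly one has a stabilizer of order two (three distinct points), and their common intersection $(1,1)$ is the global fixed point, which forces $a=b=1$ when all six coincide.

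For part~(2), the partition statement is immediate from~(1): since $K',K''$ are involutions, reachability is an equivalence relation on $\ZZ\times\ZZ$ whose classes are exactly the finite cycles, so they tile the plane. For the average length I would compute the six edge lengths of the closed hexagonal path and observe that they reduce to three values, each occurring twice, so the perimeter equals $2\bigl(|2a-b-1|+|a+b-2|+|a-2b+1|\bigr)$. Averaging over $-T\le a,b\le T$ reduces to summing a single absolute value of a linear form over a square grid, which I would evaluate by comparison with the integrals $\int_{-1}^1\!\int_{-1}^1|2u-v|\,du\,dv=\tfrac{13}{3}$ and $\int_{-1}^1\!\int_{-1}^1|u+v|\,du\,dv=\tfrac{8}{3}$ (the third form $|a-2b+1|$ giving $\tfrac{13}{3}$ again by the $u\leftrightarrow v$ symmetry); collecting contributions produces the leading term $2\cdot\tfrac14\bigl(\tfrac{13}{3}+\tfrac{8}{3}+\tfrac{13}{3}\bigr)T=\tfrac{17}{3}T$, with the discretization and the additive constants absorbed into $O(1)$.

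For part~(3) I would exploit that the six represented integers split into three complementary pairs summing to $2$, namely $\{a,2-a\}$, $\{b,2-b\}$, $\{b-a+1,a-b+1\}$, and make one representative from each pair a square (resp.\ a cube). For squares, $a=p^2$, $b=q^2$, $b-a+1=r^2$ gives $q^2-p^2+1=r^2$, solved by the infinite family $a=(2s)^2$, $b=(2s^2)^2$, $b-a+1=(2s^2-1)^2$ (distinct and off the axes for $s$ large; e.g.\ $(a,b)=(16,64)$ yields $16,64,49$). For cubes the same bookkeeping gives $q^3-p^3+1=r^3$, into which I would feed the classical parametrization $(9t^4)^3+(3t-9t^4)^3+(1-9t^3)^3=1$, producing infinitely many cycles (e.g.\ $(a,b)=(729,216)$ with cubes $729,216,-512$). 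For the all-prime claim I would simply exhibit $(a,b)=(13,19)$, whose cycle has coordinates $\{13,19,7,-11,-17,-5\}$, i.e.\ $\pm$ the primes $5,7,11,13,17,19$.

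I expect the main obstacle to be the exact evaluation of the average in part~(2): pinning the constant $\tfrac{17}{3}$ requires splitting the grid along the three axes to remove the absolute values and then arguing cleanly that replacing the discrete sums by integrals costs only $O(1)$ after dividing by the number of lattice points. By contrast, part~(1) is a finite computation organized by the dihedral symmetry, and the Diophantine constructions in part~(3) are short once the displayed identities are invoked.
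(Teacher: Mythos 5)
Your proposal is correct, and parts (2) and (3) essentially reproduce the paper's own arguments: the paper also computes the six-point cycle explicitly, also evaluates the average path length by replacing the lattice sum with the integrals of $|2x-y|$ and $|x+y|$ over $[-T,T]^2$ (arriving at the same $\tfrac{17}{3}T+O(1)$), also builds the three-square cycles from the Pell-like identity (your family $a=(2s)^2$, $b=(2s^2)^2$ is the paper's second parameterization~\eqref{eqGSquare2} up to relabeling the generator), also invokes Mahler's identity for the three-cube cycles~\eqref{eqGCubes}, and also settles the prime claim by exhibition --- your pair $(13,19)$ appears in the paper's list $\cG(T)$. Where you genuinely differ is part (1): the paper simply writes out the alternating iteration~\eqref{eqSeqab} and observes by inspection that overlaps occur exactly on the three stated lines, whereas you identify $K'$ and $K''$ as affine reflections, show $K''\circ K'$ has order three with unique fixed point $(1,1)$, and run orbit--stabilizer for the resulting dihedral group of order six. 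This buys a conceptual explanation the paper lacks: the three lines are precisely the reflection axes (the third reflection being $(x,y)\mapsto(2-y,2-x)$), the possible cycle sizes $\{1,3,6\}$ fall out of the subgroup structure rather than case-checking, and the partition claim in part (2) becomes the tautology that group orbits partition $\ZZ^2$. A further small advantage on your side: your perimeter formula $2\big(|2a-b-1|+|a+b-2|+|a-2b+1|\big)$ is exact pointwise, while the paper's reduction $\length_K(x,y)=4|2x-y-1|+2|x+y-2|$ is not an identity --- it is only valid after summing over the $x\leftrightarrow y$-symmetric box (which sends $|x-2y+1|$ to $|2x-y-1|$); both routes give the same sum and the same constant $\tfrac{17}{3}$, but yours avoids that slip.
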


%%%%
\begin{figure}[ht]
 \centering
  \includegraphics[width=0.19\textwidth]{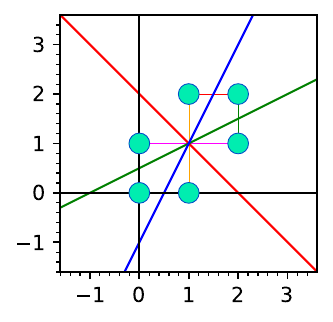}
  \includegraphics[width=0.19\textwidth]{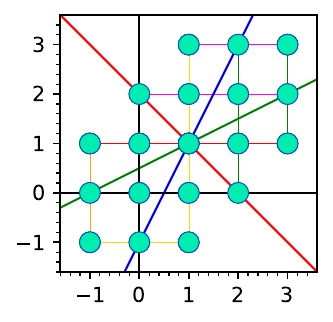}
  \includegraphics[width=0.19\textwidth]{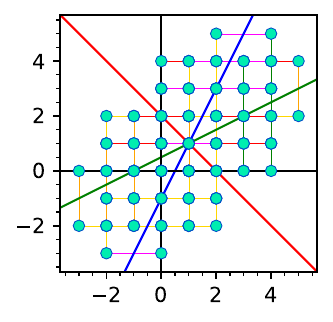}
  \includegraphics[width=0.19\textwidth]{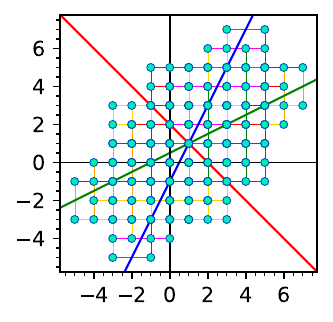}
  \includegraphics[width=0.19\textwidth]{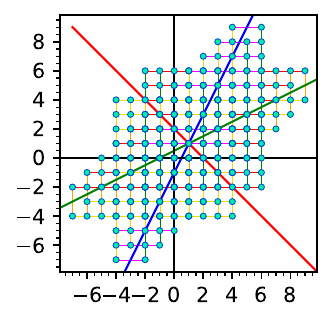}
\caption{
The smallest $K$-generated cycles by $(a,b)\in[-T,T]^2$, with $T=0,1,2,3,4$.
}
 \label{FigureCycles-MM}
 \end{figure}

We say that two pairs of integers belonging to the set 
$\scP_F(a,b)$ are $F$-$(a,b)$-\textit{equivalent}. 
We also refer to them as simply \textit{equivalent}
if the meaning is clear from the context.
As a result, we obtain two partitions of the set of lattice points 
from the plane, one corresponding to operators from~$\cK$ 
and the other to operators from~$\cL$.

In addition, as in the case of the three-dimensional analog operators~\cite{BCZ2023b}, 
we will use the same word to say that two such sets
$\scP_F(a,b)$
% ~\eqref{eqPKL}
consisting of equivalent pairs 
are themselves \textit{equivalent} to each other if one can be obtained 
from the other through a translation.

We mention that in the case of the analogue $3$-dimensional operators, 
there are four towers of equivalence classes of triples, 
of which three are similar, obtained from each other 
by a rotation~\cite[Theorems 3]{BCZ2023b}.
% Theorem... shows...
One would like to know how many equivalence classes are there and also 
how they look like in the $2$-dimensional case.
The following theorem answers this question and also provides a 
complete and explicit description of them.

%%%%%%%%%%%%%%%%%%%%%%%%%%%%%%%%%%%%%%%%%%%%%%%%%%%%%%%%%%%%
\begin{theorem}\label{TheoremMain}
   The set of pairs $\scP_{L}(a,b)$ defined by~\eqref{eqPKL} form
   a partition of the set of lattice points of the plane as follows.
\begin{enumerate}%[label={(\arabic*)}]
    \item 
Each set $\scP_{L}(a,b)$ has a first element $(m,m)$
with the property that $m\le \min\{x,y\}$ for any 
$(x,y)\in \scP_{L}(a,b)$.
    \item 
The coordinates of any point in $\scP_{L}(0,0)$, 
other than the origin, are $(T_k,T_{k+1})$
or $(T_{k+1}, T_k)$, for $k\ge 0$,
where $T_k=k(k+1)/2$ is the $k$-th triangular 
number.
% ~\mbox{\cite[{\href{https://oeis.org/A000217}{A000217}}]{oeis}}).
    \item
The points in $\scP_{L}(0,0)$ lie on the parabola of equation 
$x+y = (x-y)^2$.
    \item 
Any set $\scP_{L}(a,b)$ is equivalent modulo a translation 
with $\scP_{L}(0,0)$.
    \item 
The lattice point $(a,b)$ belongs to the parabola 
$\scP_{L}(a,b)$ whose vertex is $(m,m)$,
where $m=\frac{1}{2}(a+b-(a-b)^2)$.
\end{enumerate}
\end{theorem}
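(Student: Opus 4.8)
The plan is to produce a single conserved quantity for the pair $\cL$, to use it to collapse the two-dimensional dynamics onto a one-dimensional action, and then to read off all five assertions from the resulting explicit parametrisation. The one creative input is the invariant; everything after it is bookkeeping with triangular numbers. Concretely, I would first verify that the function $\Phi(x,y):=(x+y)-(x-y)^2$ is invariant under both $L'$ and $L''$. For $L'$ this is a one-line expansion: writing $L'(x,y)=(-x+2y+1,y)$, the sum of the new coordinates is $-x+3y+1$ and their difference is $1-(x-y)$, whose square is $(x-y)^2-2(x-y)+1$; subtracting returns exactly $x+y-(x-y)^2$. Invariance under $L''$ then comes for free, because $\Phi$ is symmetric in its arguments and $L''=\sigma L'\sigma$ with $\sigma(x,y)=(y,x)$. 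Hence $\Phi$ is constant along every orbit, so each $\scP_L(a,b)$ lies on the parabola $\Phi\equiv 2m$, that is $x+y=(x-y)^2+2m$; taking $m=0$ for the orbit of the origin already gives assertion~(3).

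Next I would pass to the coordinates $s=x+y$ and $u=x-y$, in which the conserved relation reads $s=u^2+2m$, so a lattice point of a given orbit is determined by the single integer $u$. A short computation shows the generators act on this coordinate as the affine involutions $u\mapsto 1-u$ (for $L'$) and $u\mapsto -1-u$ (for $L''$). These two reflections generate the infinite dihedral group acting on $\ZZ$, and the orbit of $u=0$ under this group is all of $\ZZ$ (the product of the two reflections is translation by $\pm 2$, and the reflections supply both parities). Since each generator is its own inverse, the set of finite compositions applied to the base point coincides with this group orbit. Translating back, the lattice points with $x-y=u$ and $x+y=u^2+2m$ are precisely $(m+T_u,\,m+T_{u-1})$ under the convention $T_u=u(u+1)/2$ for all integers $u$ (so that $T_{-n}=T_{n-1}$ and $T_u\ge 0$ always, using $T_u-T_{u-1}=u$). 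Thus the orbit is exactly the full set of lattice points on its parabola. Specialising to $m=0$ and splitting by the sign of $u$ produces the two families $(T_{k+1},T_k)$ and $(T_k,T_{k+1})$ with $k\ge 0$, which is assertion~(2); and since every orbit point has both coordinates $\ge m$, with both equal to $m$ only when $u=0$, the vertex $(m,m)$ is the unique diagonal point and attains the minimum, which is assertion~(1).

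For the remaining two assertions I would invoke the diagonal translation invariance noted in the excerpt: because $L'$ and $L''$ commute with $\tau_h(x,y)=(x+h,y+h)$, one has $\scP_L(m,m)=\tau_m\,\scP_L(0,0)$, giving assertion~(4). Finally, for an arbitrary lattice point $(a,b)$ the conserved value forces $m=\frac12\big(a+b-(a-b)^2\big)$, and this is an integer because $(a-b)^2\equiv a-b\equiv a+b\pmod 2$; hence $(a,b)$ lies on the parabola $\Phi\equiv 2m$ and therefore belongs to the orbit with vertex $(m,m)$, which is assertion~(5) and, since distinct values of $m$ give disjoint parabolas covering every lattice point, simultaneously confirms the partition. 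The only step that is genuinely not mechanical is guessing the invariant $\Phi$; once it is in hand, the reduction to the infinite dihedral group on $\ZZ$ is what makes the decisive claim ``one orbit fills its entire parabola'' transparent, and the rest reduces to the second-difference identity $T_{k+1}+T_{k-1}=2T_k+1$ for triangular numbers together with the parity check that makes $m$ an integer.
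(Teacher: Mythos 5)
Your proof is correct, but it takes a genuinely different route from the paper's. You guess the conserved quantity $\Phi(x,y)=(x+y)-(x-y)^2$ and reduce the dynamics to the reflections $u\mapsto 1-u$ (for $L'$) and $u\mapsto -1-u$ (for $L''$) acting on the difference coordinate $u=x-y$; since the two involutions generate the infinite dihedral group, whose orbit on $\ZZ$ is everything, completeness of each orbit (that it fills \emph{all} lattice points of its parabola) drops out of a one-line group-orbit computation, and assertions (3) and (5) are immediate from the invariance of $\Phi$, with the partition indexed by the even values $2m$ of $\Phi$. The paper never writes down the invariant or invokes any group structure: it proceeds by descent and induction. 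It observes that $L'$ and $L''$ change the coordinate gap by exactly $1$ (the shadow of your dihedral action), uses the decreasing choice to reach a diagonal point $(m,m)$ and to prove minimality of $m$ (Subsections \ref{subsectionTM2} and \ref{subsectionTM1}), builds the orbit of $(0,0)$ explicitly by induction via $L'(T_k,T_{k+1})=(T_{k+2},T_{k+1})$ and $L''(T_{k+1},T_k)=(T_{k+1},T_{k+2})$ --- precisely your identity $T_{k+1}+T_{k-1}=2T_k+1$ --- and only afterwards reads off the parabola equation and the vertex formula from that parametrization (Subsections \ref{subsectionTM3}--\ref{subsectionTM4}); so the logical direction between ``parametrization'' and ``parabola equation'' is reversed relative to yours. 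Your version buys uniformity (the extended triangular numbers $T_u$, $u\in\ZZ$, handle both branches at once) and makes the ``one orbit per parabola'' and partition claims structurally transparent; the paper's version is more elementary and constructive, needing no invariant-guessing or group language, at the cost of a separate existence-of-minimum argument and branch-by-branch bookkeeping.
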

The parabola on which the points of $\scP(0,0)$ lie is shown in Figure~\ref{FigureLadders}. 
In Figure~\ref{Figure17parabolas}, all the parabolas described in Theorem~\ref{TheoremMain} are shown
partitioning the plane as follows 
\begin{equation}\label{eqPartition}
    \ZZ^2 = \bigcup_{m\in\ZZ} \scP_L(m,m).
\end{equation}

As an immediate follow up of this partition, let us consider 
a particle moving swiftly up and down 
the ladders in Figure~\ref{FigureLadders}, 
from any lattice point on a parabola 
to another point on~it.
Additionally, to enable the particle to move
to any location on the plane, it should have the option to ``jump''
from one parabolic trajectory to another,
moving along the square grid in small steps, from a lattice point to any 
of its four neighboring points.
Thus, we obtain a distance function, which we call \textit{parabolic-taxicab} 
distance and denote by $\dP$.
In this manner, assuming $P$ and $Q$ are two points of integer coordinates,
$\dP(P,Q)$ is defined as the smallest number of steps required to travel
from $P$ to $Q$ in successive steps, which can be either climbing 
up or down the ladders of a parabola $\cP_L(m,m)$, for some $m\in\ZZ$, 
or making unit jumps vertically or horizontally on the square grid. 
(For example, in Figure~\ref{FigureTwoPaths}, two such paths are shown, with one being minimal.)
As a consequence, we obtain a special looking ball (see Figure~\ref{FigureBalls})
whose exact formula for the area is likely to be the one provided next,
if the center is on the first diagonal.

%%%%%%%%%%%%%%%%%%%%%%%%%%%%%%%%%%%%%%%%%%%%%%%%%%%%%%%%%%%%
\begin{figure}[ht]
 \centering
 \mbox{
 \subfigure{
    \includegraphics[width=0.49\textwidth]{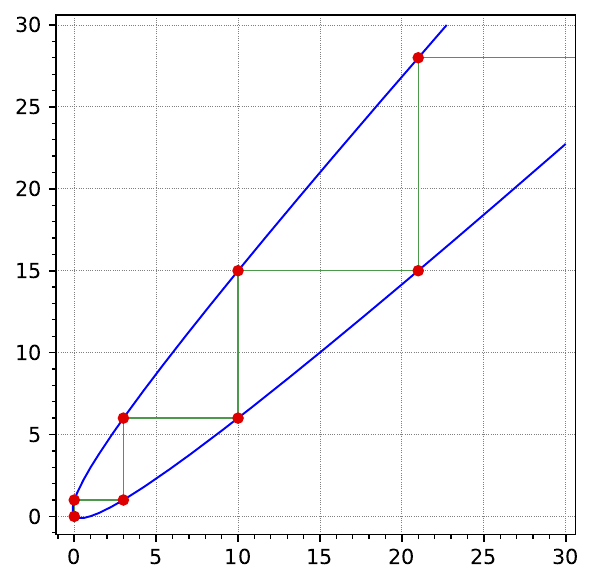}
    % {DotsOnParabolaK1K2.pdf}
 \label{FigureDotsOnParabolaK1K2}
 }%\quad
 \subfigure{
    \includegraphics[width=0.49\textwidth]{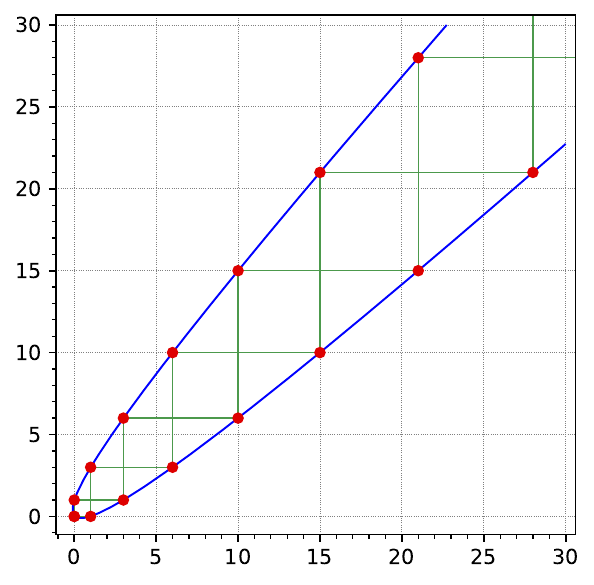}
 \label{FigureBothLadders}
 }
 }
\caption{
The lattice points in $\scP_L(0,0)$, starting with the vertex
$(0,0)$.
The points on the upper branch have coordinates $(T_k,T_{k+1})$,
while those on the lower branch have coordinates 
$(T_{k+1},T_k)$, where $T_k = k(k+1)/2$, for $k\ge 0$.
}
 \label{FigureLadders}
 \end{figure}
%%%%%%%%%%%%%%%%%%%%%%%
\begin{conjecture}\label{ConjectureBallArea}
 Let $m$ and $r\ge 0$ be integers.
Then, in the parabolic-taxicab geometry, 
 the measure of the closed ball of center $(m,m)$ and radius 
 $r$ is
\begin{equation}\label{eqAreaBall}
\mu\Big(\scBpc\big((m,m),r\big)\Big) = 
  \begin{cases}
    \frac{1}{12}\big(10 r^3+9r^2+26r+12\big), & 
   \text{if $r$ is even;} \\[4mm]
    \frac{1}{12}\big(10 r^3+9r^2+26r+15\big), & 
   \text{if $r$ is odd.}
  \end{cases}  
\end{equation}
\end{conjecture}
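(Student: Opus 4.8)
The plan is to reduce to a center at the origin, determine $\dP\big((0,0),\cdot\big)$ for every lattice point by a change of coordinates adapted to the parabolas, and then sum. By part~(4) of Theorem~\ref{TheoremMain} the family $\{\scP_L(m,m)\}_{m\in\ZZ}$ is invariant under the diagonal translations $(x,y)\mapsto(x+h,y+h)$, and these translations evidently preserve both the ladder adjacencies and the unit grid steps; hence $\dP$ is invariant under diagonal translations and $\scBpc\big((m,m),r\big)$ is a translate of $\scBpc\big((0,0),r\big)$, so I may assume the center is the origin. To each lattice point $P=(x,y)$ I attach $d:=x-y$ together with the parabola index $m:=\tfrac12\big((x+y)-(x-y)^2\big)$ of Theorem~\ref{TheoremMain}(5); then $(m,d)\leftrightarrow(x,y)$ is a bijection, $P\in\scP_L(m,m)$, and by Theorem~\ref{TheoremMain}(2) the ladder distance of $P$ from its own vertex $(m,m)$ equals $\delta:=|d|$. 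A direct check shows that a ladder step fixes $m$ and changes $\delta$ by exactly $1$, while a unit grid step changes $d$ by $\pm1$ (hence $\delta$ by exactly $1$) and changes $m$ by one of $\pm d,\ \pm d\mp1$. Two facts result: every step changes $\delta$ by exactly $1$, so a path of length $\ell$ ending at a point with parameter $\delta$ forces $\ell\ge\delta$ and $\ell\equiv\delta\pmod2$; and among all steps the most $m$-decreasing one is worth $-(\delta+1)$ and must increase $\delta$, while the most $m$-increasing one is worth $+\delta$ and must decrease $\delta$.

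Let $f_+(r,\delta)$ and $f_-(r,\delta)$ denote the largest and the smallest index $m$ of a point with $|d|=\delta$ lying in $\scBpc\big((0,0),r\big)$. The core claim is
\[
  f_+(r,\delta)=T_{\lfloor(r+\delta)/2\rfloor}-T_\delta,\qquad
  f_-(r,\delta)=-\,T_{\lfloor(r+\delta)/2\rfloor}.
\]
The upper bounds come from a height–sum inequality for the walk $\delta_0=0,\dots,\delta_\ell=\delta$ traced by $\delta$ along a path: since the height just before the $j$-th up-step (resp.\ down-step) is at most (number of ups so far) $-$ (number of downs so far), one obtains $\sum_{\mathrm{up}}(\delta_{i-1}+1)\le T_u$ and $\sum_{\mathrm{down}}\delta_{i-1}\le T_u-T_\delta$, where $u=(\ell+\delta)/2\le\lfloor(r+\delta)/2\rfloor$ is the number of up-steps. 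Combining these with the two ``worth'' bounds above gives $-T_{\lfloor(r+\delta)/2\rfloor}\le m\le T_{\lfloor(r+\delta)/2\rfloor}-T_\delta$. Both extremes are attained: a run of $k=\lfloor(r+\delta)/2\rfloor$ downward grid steps $(x,y)\mapsto(x,y-1)$ reaches $m=-T_k$ at $\delta=k$ in $k$ steps, after which $k-\delta$ ladder steps descend to the prescribed $\delta$, realizing $f_-$; symmetrically, $k$ ladder steps up to $\delta=k$ followed by $k-\delta$ maximally $m$-increasing grid steps realize $f_+$, each path using $2k-\delta\le r$ steps.

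It remains to show that the set of indices $m$ realized at $(r,\delta)$ is the full integer interval $[f_-(r,\delta),f_+(r,\delta)]$, with no parity gaps: starting from either extremal path, replacing a single $m$-decreasing up-step of maximal weight $-(\delta+1)$ by one of weight $-\delta$, or inserting a canceling ladder up–down pair, shifts $m$ by one unit without exceeding the budget, and such substitutions sweep the entire interval. Granting this, $\scBpc\big((0,0),r\big)$ contains exactly $L(r,\delta):=2T_{\lfloor(r+\delta)/2\rfloor}-T_\delta+1$ points for each admissible $d=\pm\delta$, and by the swap symmetry $(x,y)\mapsto(y,x)$, which sends $(m,d)\mapsto(m,-d)$ and preserves $\dP$, the two signs contribute equally; hence
\[
  \mu\big(\scBpc((0,0),r)\big)=L(r,0)+2\sum_{\delta=1}^{r}L(r,\delta).
\]
Splitting the floor $\lfloor(r+\delta)/2\rfloor$ according to the parity of $r$ and using $\sum_{j\le n}T_j=\binom{n+2}{3}$ evaluates this sum to the cubic of~\eqref{eqAreaBall}, the parity of $r$ producing exactly the two constant terms $12$ and $15$. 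I have verified that this formula reproduces the values $1,5,15,37,75,135$ for $r=0,\dots,5$.

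The delicate point is the lower bound for the extremal reach together with the interval-filling step, i.e.\ proving that the greedy ``ladder out, shift, ladder back'' paths are genuinely optimal and that no intermediate index is skipped. This requires controlling simultaneously the two flavours of grid step (the $\mp1$ discrepancy between the moves of weight $\pm\delta$ and those of weight $\pm\delta\mp1$) and the parity constraint $\ell\equiv\delta\pmod2$. This bookkeeping, rather than any single hard estimate, is the main obstacle, and it is precisely what pins down the exact constants distinguishing the even and odd cases in~\eqref{eqAreaBall}.
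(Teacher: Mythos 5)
First, a point of reference: the paper does not prove this statement at all — it is stated as Conjecture~\ref{ConjectureBallArea}, and the only support offered in Section~\ref{SectionDistance} is the iterative computation of $\mu\big(\scBpc(O,r)\big)$ for small $r$ (the sequence $1,5,15,37,\dots$) followed by a finite-difference extrapolation. So there is no paper proof to compare against, and your attempt goes genuinely further than the paper. Your framework is, as far as I can check, sound: the $(m,d)$ change of coordinates is a bijection of $\ZZ^2$; ladder steps fix $m$ and change $\delta=|d|$ by exactly $1$; grid steps change $\delta$ by $1$ and change $m$ by one of $-\delta,-\delta-1$ (when $\delta$ increases) or $\delta,\delta-1$ (when $\delta$ decreases); your height-sum inequalities correctly give the extremal reach $f_\pm(r,\delta)$, and I verified that the count $L(r,\delta)=2T_{\lfloor(r+\delta)/2\rfloor}-T_\delta+1$ reproduces $1,5,15,37,75,135$ and, after summation, the two cubics in~\eqref{eqAreaBall}.

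The genuine gap is the one you flag yourself: the interval-filling step. What your argument actually establishes is the inclusion of the reachable $m$'s at level $\delta$ in $[f_-(r,\delta),f_+(r,\delta)]$ together with attainment of the two endpoints — that is, the \emph{upper} bound $\mu\big(\scBpc(O,r)\big)\le$ the right-hand side of~\eqref{eqAreaBall}. The matching lower bound needs every integer in the interval to be attained, and your sweep argument is not yet a proof, for two concrete reasons. (i) Exchanging one grid step for the other of the same $\delta$-direction (weight $-\delta-1$ versus $-\delta$, or $\delta$ versus $\delta-1$) is a legitimate local move, because both operators produce the same $d$-evolution; but exchanging a grid step for a ladder step is not: ladder steps flip the sign of $d$ (they act as $d\mapsto 1-d$ or $d\mapsto -1-d$), so the remainder of the path changes meaning. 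You need the structural lemma that at \emph{every} sign of $d$ both weights for each $\delta$-direction are realizable by some operator, so that the reachable sums depend only on the $\delta$-walk and the weight choices, not on the sign history. (ii) Even granting that, the per-step weight sets $\{0,-h,-h-1\}$ (up) and $\{0,h-1,h\}$ (down) are not intervals, so filling $[f_-,f_+]$ requires an actual sumset induction (for instance: up-steps at heights $0,\dots,j$ realize all of $[-T_{j+1},0]$, and symmetrically on the positive side); the case of a single down-step, whose weight set $\{0,k-1,k\}$ has a hole of length $k-2$, shows that the interval property fails stepwise and must be restored by the other steps. Both points look fixable, and completing them would upgrade the paper's conjecture to a theorem; but as written the lower bound, and hence the exact equality in~\eqref{eqAreaBall}, is not established.
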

Precise definitions, examples, and other relevant information
about the parabolic-taxicab distance are discussed in
Section~\ref{SectionDistance}.
%%%%%%%%%%%%%%%%%%%%%%%%%%%%%%%%%%%%%%%%%%%%%%%%%%%%%%%%%%%%
\begin{figure}[htb]
 \centering
 \subfigure{
    \includegraphics[width=0.48\textwidth]{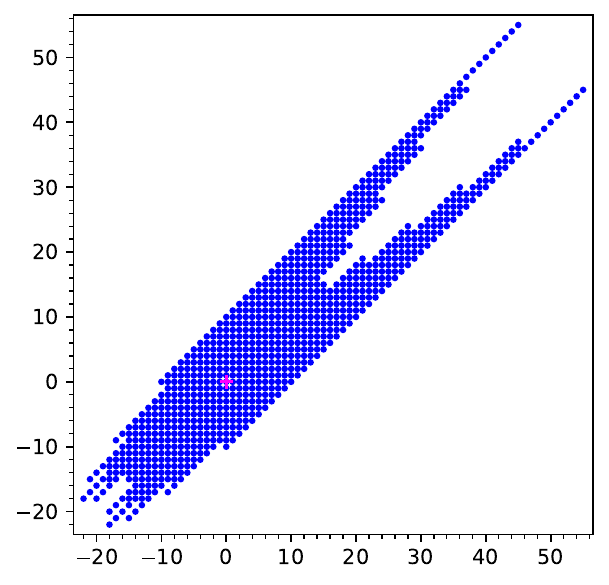}\
    \includegraphics[width=0.48\textwidth]{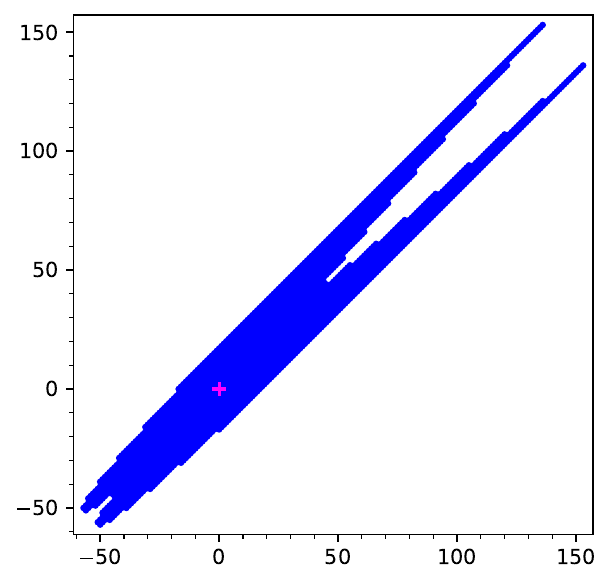}
 }%\quad
\caption{
The balls $\scBpc(O,r)$ centered at the origin $O=(0,0)$
and radii $r=10, 17$. 
The areas of the balls are $931$ and $4349$, respectively.
}
 \label{FigureBalls}
 \end{figure}

Next we study the distribution of the represented integers 
$\cR(a,b)$ in residue classes.
Since, according to Theorem~\ref{TheoremMain}, any parabola $\cP_L(a,b)$
is obtained as a translation along the first diagonal of the parabola
$\cP_L(0,0)$, it is enough to analyse the later case.

Let $T>0$ and denote by $N_L(l,p;T)$ the number of represented integers less than~$T$ that belong to the residue class $l\pmod p$, that is,
\begin{equation}\label{eqN}
   N_L(l,p;T) := \#\big\{
   r\in\scR_L(0,0) : r\le T,\ r\equiv l \pmod p
   \big\}.
\end{equation}
Then, the required densities will be given by
\begin{equation}\label{eqLimitDensity}
   \dens_L(l,p) := \lim\limits_{T\to\infty}
   \sdfrac{1}{\#\big(\scR_L(0,0)\cap[0,T]\big)}
   N_L(l,p;T),
\end{equation}
for $0\le l< p$.

%%%%%%%%%%%%%%%%%%%%%%%%%%%%%%%%%%%%%%%%%
\begin{theorem}\label{TheoremLmodp}
    Let $p$ be prime and let $\dens_L(l,p)$ denote the limit density of the residue class
$l\equiv R \mod p$ of the represented integers 
$R \in \scR_L(0, 0)$, for $l = 0, 1,\dots, p-1$. Then:
\begin{enumerate}
    \item If $p=2$, then $\dens_2(0)=\dens_2(1)=\frac{1}{2}$.
    \item If $p\ge 3$, then
    \begin{equation}\label{eqdensLpl}
    \begin{cases}
  \dens_L(l,p) = 0, & \text{ if } \legendre[p]{2l+2^{-2}}=-1; \\[2mm]
  \dens_L(l,p) =\frac{1}{p},  & 
  \text{ if } l\equiv -2^{-3}\pmod p;\\[2mm]
  \dens_L(l,p) =\frac{2}{p}, & 
  \text{ if } \legendre[p]{2l+2^{-2}}=1.
    \end{cases}
\end{equation}
\end{enumerate}
(Here $\legendre[p]{a}$ denotes the Legendre symbol and the inverses are taken modulo $p$.)
\end{theorem}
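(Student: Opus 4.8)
The plan is to recast the statement about represented integers as one about triangular numbers, and then to reduce the density computation to counting the solutions of a single quadratic congruence modulo $p$. By Theorem~\ref{TheoremMain}(2), every point of $\scP_L(0,0)$ other than the origin has coordinates $(T_k,T_{k+1})$ or $(T_{k+1},T_k)$ for some $k\ge0$, and the origin equals $(T_0,T_0)$, so $\scR_L(0,0)$ is exactly $\{T_k:k\ge0\}$, the set of triangular numbers. Since $k\mapsto T_k$ is strictly increasing, the normalizing count satisfies $\#\big(\scR_L(0,0)\cap[0,T]\big)=K(T)+1$ with $K(T)=\big\lfloor(-1+\sqrt{1+8T})/2\big\rfloor\sim\sqrt{2T}$, while $N_L(l,p;T)$ counts the indices $k\le K(T)$ for which $T_k\equiv l\pmod p$. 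Thus the problem becomes one about the distribution of the sequence $(T_k\bmod p)_{k\ge0}$.

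Next I would exploit periodicity. A direct computation gives $T_{k+p}-T_k=\tfrac{1}{2}p(2k+p+1)$, which for odd $p$ is divisible by $p$, so $k\mapsto T_k\bmod p$ is periodic with period exactly $p$. Setting $n_l:=\#\{j\in\{0,\dots,p-1\}:T_j\equiv l\pmod p\}$, the equidistribution of indices among complete periods of length $p$ --- with a discrepancy of $O(1)$ that is negligible against $K(T)\to\infty$ --- yields $\dens_L(l,p)=n_l/p$. For $p=2$ the period is $4$ with residue pattern $0,1,1,0$, so $n_0=n_1=2$ out of a period of length $4$ and each density equals $\tfrac12$, which is case~(1). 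It then remains only to evaluate $n_l$ for odd $p$.

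For odd $p$, counting the $j\in\{0,\dots,p-1\}$ with $T_j\equiv l$ amounts to solving $j^2+j-2l\equiv0\pmod p$. Completing the square gives $(2j+1)^2\equiv 8l+1\pmod p$, and since $2$ is invertible modulo $p$ the number of admissible $j$ equals the number of square roots of $8l+1$. Writing $8l+1=4\big(2l+2^{-2}\big)$ and using that $4$ is a square, one has $\legendre[p]{8l+1}=\legendre[p]{2l+2^{-2}}$; hence $n_l=2$ when this symbol is $+1$ (density $2/p$), $n_l=0$ when it equals $-1$ (density $0$), and $n_l=1$ exactly when $8l+1\equiv0$, that is $l\equiv-2^{-3}\pmod p$ (density $1/p$). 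This reproduces precisely the three cases in~\eqref{eqdensLpl}.

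Every individual step is elementary, so the only point demanding genuine care is the passage from counting by value to counting by index, together with the verification that the $O(1)$ discrepancy from the periodic residue pattern washes out in the limit; both are controlled by the monotonicity of $T_k$ and by the growth $K(T)\sim\sqrt{2T}$ of the index range.
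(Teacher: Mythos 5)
Your proposal is correct and follows essentially the same route as the paper: reduce via Theorem~\ref{TheoremMain} to the residues of triangular numbers $T_k \pmod p$, use periodicity in $k$, and count solutions of the completed-square congruence (your $(2j+1)^2\equiv 8l+1$ is the paper's $(k+2^{-1})^2\equiv 2l+2^{-2}$ multiplied by the square $4$) via the Legendre symbol. Your treatment is in fact slightly more careful than the paper's on two points it glosses over --- the passage from counting represented values $r\le T$ to counting indices $k\le K(T)\sim\sqrt{2T}$, and the explicit period-$4$ pattern $0,1,1,0$ for $p=2$ --- but these are refinements of the same argument, not a different one.
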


%%%%%%%%%%%%%%%%%%%%%%%%%%%%%%%%%%%%%%%%%%%%%%%%%%%%%%%%%%%%
\begin{figure}[htb]
 \centering
    \includegraphics[width=0.50\textwidth]
    {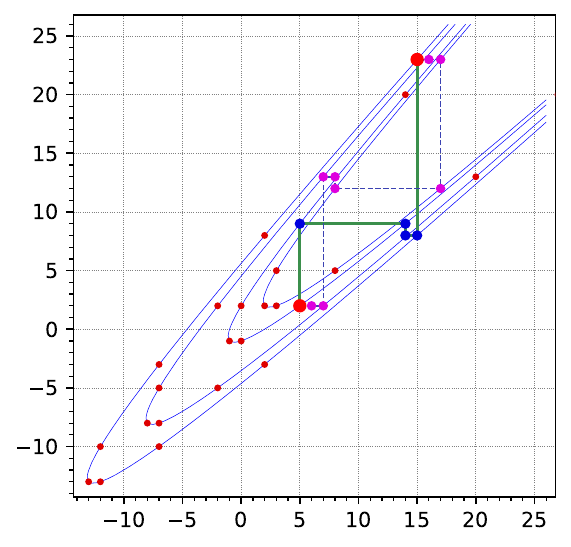}%{TwoPaths.pdf}
    \hfill
        \includegraphics[width=0.49\textwidth]
    {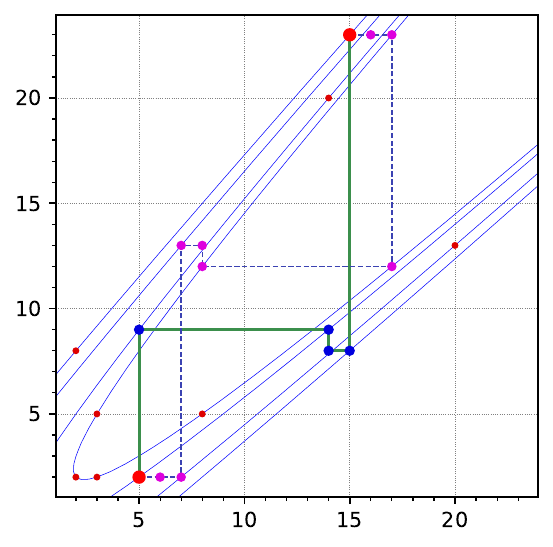}%{TwoPathsExtract.pdf}
    % \hfill\mbox{}
\caption{Two paths connecting $P=(5,2)$ and $Q = (15,23)$ through segments that combine taxicab steps with steps climbing the parabolic ladders shown in two zoom-levels.
The first path, proving to be minimal, passes through points 
$(5, 2)$, $(5, 9)$, $(14, 9)$, $(14, 8)$, $(15, 8)$, $(15, 23)$, 
and thus 
\mbox{$\dP(P,Q)=5$}.
The second path, shown dotted, has $9$ steps and passes through points
$(5, 2)$, $(6, 2)$, $(7, 2)$, $(7, 13)$, $(8, 13)$, $(8, 12)$,
$(17, 12)$, $(17, 23)$, $(16, 23)$, $(15, 23)$.
Compare \mbox{$\dP(P,Q)=5$} with the taxicab distance,
which is equal to $(15-5)+(23-2)=31$, and with the Euclidean distance, 
which is $\sqrt{541}\approx 23.25941$.
}
 \label{FigureTwoPaths}
 \end{figure}

%%%%%%%%%%%
\medskip
The paper is organized as follows. In Section~\ref{SectionLemmas}
we prove the basic properties of operators $K$ and $L$.
In Section~\ref{SectionKTrajectories} we study the cycles
generated by $K$, prove Theorem~\ref{TheoremK},
and give parameterized families of distinguished trajectories.
In Section~\ref{SectionProofThMain} we begin the study of 
the parabolic trajectories proving Theorem~\ref{TheoremMain}, and then continue in Section~\ref{SectionLDensities}, where we prove Theorem~\ref{TheoremLmodp}.
The parabolic-taxicab distance is discussed in Section~\ref{SectionDistance}.

%%%%%%%%%%%%%%%%%%%%%%%%%%%%%%%%%%%%%%%%%%%%%%%%%%%
\section{Preparatory Lemmas}\label{SectionLemmas}
%%%%%%%%%%%%%%%%%%%%%%%%%%%%%%%%%%%%%%
\begin{lemma}\label{LemmaInvolutionsGeneral}
   The operators defined by~\eqref{eqF} 
have the following properties:
\begin{enumerate}
    \item $F'$ and $F''$ are involutions
if and only if either $\alpha = -1$ or 
they are equal to the identity.
    \item 
    $F'$ and $F''$ are invariant under translations
if and only if $\alpha +\beta = 1$.
    \item  
    $F'$ and $F''$ are at the same time both involutions and invariant to translations 
if and only if either $\alpha = -1$ and $\beta = 2$ or $F'$ and $F''$ are 
equal to the identity.
\end{enumerate} 
\end{lemma}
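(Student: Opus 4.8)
The plan is to prove all three parts by direct computation, exploiting the evident symmetry between $F'$ and $F''$: since $F''$ is obtained from $F'$ by interchanging the roles of the two coordinates, every assertion proved for $F'$ transfers verbatim to $F''$. Thus it suffices to analyze $F'(x,y) = (\alpha x + \beta y + 1, y)$ throughout.

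For part (1), I would form the self-composition and read off coefficient conditions. Computing
\begin{equation*}
F'\big(F'(x,y)\big) = \big(\alpha^2 x + (\alpha+1)\beta\, y + (\alpha + 1), \, y\big),
\end{equation*}
and requiring this to equal $(x,y)$ for every $(x,y)\in\ZZ^2$ forces $\alpha^2 = 1$, $(\alpha+1)\beta = 0$, and $(\alpha+1)\cdot 1 = 0$. The last equation, which stems from the additive constant $1$, immediately gives $\alpha = -1$; conversely, when $\alpha = -1$ the first two conditions hold automatically for every $\beta$, so $F'$ is an involution precisely when $\alpha = -1$. The alternative branch $\alpha = 1$ of $\alpha^2 = 1$ would additionally demand $2\beta = 0$ together with a vanishing constant term, i.e.\ the degenerate affine map reducing to the identity; this is the ``equal to the identity'' case in the statement, which cannot occur for the maps~\eqref{eqF} because their constant term is $1 \neq 0$.

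For part (2), I would test translation invariance directly. Writing $h$ for the common shift, one has
\begin{equation*}
F'(x+h, y+h) = \big(\alpha x + \beta y + 1 + (\alpha+\beta)h, \, y + h\big),
\end{equation*}
whereas $F'(x,y) + (h,h) = (\alpha x + \beta y + 1 + h,\, y+h)$. Matching the first coordinates for all $h$ yields the single relation $\alpha + \beta = 1$, and the second coordinates already agree; hence $F'$ (and likewise $F''$) is translation invariant if and only if $\alpha + \beta = 1$.

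Finally, part (3) is just the conjunction of the two previous characterizations: imposing both $\alpha = -1$ (involution) and $\alpha + \beta = 1$ (translation invariance) gives $\beta = 2$, while the degenerate identity alternative is once more excluded by the constant $1$. I expect no genuine obstacle here, since the arguments are elementary comparisons of coefficients; the only point requiring care is the bookkeeping of the additive constant, which is exactly what separates the true involution $\alpha = -1$ from the spurious identity branch flagged in the statement.
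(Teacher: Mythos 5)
Your proof is correct and follows the same basic route as the paper's: reduce to $F'$ by the symmetry of the definitions, expand the self-composition, and compare coefficients. There is, however, one genuine difference in execution, and it is in your favor. The paper's printed computation reads $F'\big(F'(x,y)\big) = F'(\alpha x + \beta y, y) = (\alpha^2 x + \beta(\alpha+1)y, y)$, silently dropping the additive constant $1$ from the definition~\eqref{eqF}; as a result it obtains only the two conditions $\alpha^2 = 1$ and $\beta(\alpha+1)=0$, and is then led to keep the branch $\alpha = 1$, $\beta = 0$ and call it ``$F' = Id$''. Your computation retains the constant and therefore produces the third condition $\alpha + 1 = 0$ coming from the constant term, which forces $\alpha = -1$ outright and shows that the ``equal to the identity'' alternative in the statement is vacuous for the operators of~\eqref{eqF}: since their constant term is $1 \neq 0$, they can never be the identity, and indeed for $\alpha = 1$, $\beta = 0$ the map $(x,y)\mapsto(x+1,y)$ is a translation, not an involution. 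So the lemma as stated is true only because that disjunct is never realized, and your proof makes this explicit, whereas the paper's derivation of it rests on the dropped constant. Parts (2) and (3) of your argument coincide with the paper's.
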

%%%%%%%%%%%%%
\begin{proof}
Due to the symmetry of the definitions, 
it is enough to prove the lemma for just one of the operators $F'$ or $F''$.

Checking for the involution property,
we note that
\begin{equation*}
  F'\big(F'(x,y)\big)=F'(\alpha x +\beta y,y)
=(\alpha(\alpha x+\beta y) +\beta y,y) 
= (\alpha^2 x + \beta(\alpha+1) y,y).
\end{equation*}
Then, the condition for $F'$ to be an
involution is that
$\alpha^2 x + \beta(\alpha+1) y = x$
for all integers $x$ and $y$, 
which occurs only if both equalities
$\alpha^2=1$ and $\beta(\alpha+1)=0$ hold.
Therefore $F'$ is an involution if either $\alpha = -1$ or $\alpha = 1$ and $\beta =0$, that is,
$\alpha = -1$ or $F'=Id$.

\smallskip
The condition for $F'$ to be invariant under translations is
\mbox{$F'(x+h,y+h)=F'(x,y)+h$} for all integers $x,y,h$.
Since the equality holds on the second coordinate, 
this is equivalent with
\begin{equation*}
  \alpha (x+h) +\beta (y+h) +1 = \alpha x+\beta y +1+h, \ \ \text{ for all $x,y,h\in\ZZ$},
\end{equation*}
which holds if and only if $\beta = 1-\alpha$.
This concludes the proof of the lemma.
\end{proof}

%%%%%%%%%%%%%%%%%%%%%%%%%%%%%%%%%%%%%%
\begin{lemma}\label{LemmaLL}
Let $a,b$ be integers and, for any integer $n\ge 0$, let $K^{[n]}$ denote the composition of $n$
operators $K'$ and $K''$ applied alternatively 
and starting with $K'$,
that is, $K^{[0]}=Id$, $K^{[1]}=K'$,
$K^{[2]}=K''\circ K'$, 
$K^{[3]}=K'\circ K''\circ K'$,
$K^{[4]}=K''\circ K'\circ K''\circ K'$,
and so on. 
Then
\begin{equation*}
\begin{split}
  K^{[n]}(a,0) &= \left(
\Big(-2\Big\lfloor\frac{n+1}{2}\Big\rfloor+1\Big)a 
 + T_{2\lfloor \frac{n+1}{2}\rfloor -1},
 -2\Big\lfloor\frac{n}{2}\Big\rfloor  a
 + T_{2\lfloor \frac{n}{2}\rfloor}
 \right), \\ 
   K^{[n]}(0,b) &= \left(
2\Big\lfloor\frac{n+1}{2}\Big\rfloor b
 + T_{2\lfloor \frac{n+1}{2}\rfloor -1},
 \Big(2\Big\lfloor\frac{n}{2}\Big\rfloor +1\Big) b
 + T_{2\lfloor \frac{n}{2}\rfloor}
 \right), \\ 
 \end{split}
\end{equation*}
where $T_n=n(n+1)/2$ is the $n$th triangular number. 
\end{lemma}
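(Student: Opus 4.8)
The plan is to establish the two closed forms by induction on $n$, using the recursive description in which the outermost operator alternates: from step $n$ to step $n+1$ one applies the first operator when $n$ is even and the second when $n$ is odd. The relevant pair here is the translation-invariant one from \eqref{eqL}, since the triangular numbers and the coefficients growing linearly in $n$ are incompatible with the six-periodic orbits of $\cK$ established in Theorem~\ref{TheoremK}; accordingly I write $L^{[n]}$ below. First I would dispose of the base cases $n=0,1$ by direct substitution, checking that the floor expressions $\lfloor(n+1)/2\rfloor$, $\lfloor n/2\rfloor$ and the triangular subscripts reduce correctly (with the conventions $T_0=0$, $T_1=1$, $L^{[0]}=\mathrm{Id}$).

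The heart of the argument is the inductive step, which I would organize around the parity of $n$. The key observation is that as $n$ increases by one, exactly one of the two floor quantities increments: $\lfloor(n+1)/2\rfloor$ jumps by $1$ precisely when $n$ is even, while $\lfloor n/2\rfloor$ jumps by $1$ precisely when $n$ is odd. This dovetails with the alternation of the operators, since $L'$ alters only the first coordinate and $L''$ only the second. Thus when $n=2k$ is even I would apply $L'$ to the inductive hypothesis, leaving the second coordinate untouched, and verify that
\[
-\big((-2k+1)a+T_{2k-1}\big)+2\big(-2k\,a+T_{2k}\big)+1=(-2k-1)a+T_{2k+1},
\]
where the coefficient of $a$ is immediate and the constant term collapses via the single identity $T_{2k+1}=2T_{2k}-T_{2k-1}+1$, itself a consequence of $T_m-T_{m-1}=m$. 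The odd case $n=2k+1$ is entirely symmetric: applying $L''$ updates the second coordinate and the constant term reduces through $T_{2k+2}=2T_{2k+1}-T_{2k}+1$.

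For the second formula I would avoid a second induction by invoking translation invariance. Since $L'$ and $L''$ satisfy $\alpha+\beta=1$, Lemma~\ref{LemmaInvolutionsGeneral}(2) makes each of them—and hence any composite $L^{[n]}$—invariant under the diagonal shift $(x,y)\mapsto(x+h,y+h)$. Taking $h=b$ gives $L^{[n]}(0,b)=L^{[n]}(-b,0)+(b,b)$; substituting the already-proved first formula with $a$ replaced by $-b$ and adding $(b,b)$ reproduces the stated expression, the added $b$ turning the coefficients $2\lfloor(n+1)/2\rfloor-1$ and $2\lfloor n/2\rfloor$ into $2\lfloor(n+1)/2\rfloor$ and $2\lfloor n/2\rfloor+1$ respectively, with the triangular subscripts unchanged.

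I expect the only real friction to be bookkeeping rather than conceptual: keeping the floor indices and the triangular subscripts aligned through the parity split, and ensuring the two base cases are consistent with $L^{[0]}=\mathrm{Id}$. Once the recurrence $T_m-T_{m-1}=m$ is isolated, each constant-term verification is a one-line computation and the induction closes cleanly.
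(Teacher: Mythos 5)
Your proposal is correct, and your re-reading of the operators as $L'$, $L''$ from~\eqref{eqL} agrees with what the paper actually does: although the lemma is stated in $K$-notation, the paper's own proof tabulates $K^{[2]}(a,0)=(-a+1,-2a+3)$ and $K^{[1]}(0,b)=(2b+1,b)$, which are the values of $L''\circ L'$ and $L'$ (a genuine $K''\circ K'$ would give $(-a+1,-a+2)$, and the six-point cycles of Theorem~\ref{TheoremK} could never produce unbounded triangular-number growth); moreover the lemma is later invoked in Section~\ref{SectionProofThMain} precisely for the $L$-ladder, so your correction matches the paper's intent. As for the argument itself, your skeleton --- induction on $n$ --- is the same as the paper's, but the paper leaves nearly everything implicit: it lists the iterates for $n=1,\dots,6$ in both columns, says the general formulas ``can be deduced'' as linear functions with triangular-number constants, and closes with ``the result follows by induction,'' without writing the inductive step. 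You supply that step in full: the parity bookkeeping (the floor $\lfloor(n+1)/2\rfloor$ increments exactly when $L'$ acts and $\lfloor n/2\rfloor$ exactly when $L''$ acts, matching which coordinate each operator changes) and the identity $T_{2k+1}=2T_{2k}-T_{2k-1}+1$ that closes the constant terms; I checked both parities and they are right. The one genuinely different choice is your treatment of the second formula: the paper computes the $(0,b)$ column by a separate direct calculation, whereas you derive it from the $(a,0)$ formula via $L^{[n]}(0,b)=L^{[n]}(-b,0)+(b,b)$, justified by Lemma~\ref{LemmaInvolutionsGeneral}(2). That is a cleaner and more economical route: it halves the verification and exploits exactly the structural property $\alpha+\beta=1$ that distinguishes $\cL$ from $\cK$ (and which fails for $\cK$, another sign the lemma cannot concern the $K$-operators). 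The only convention worth making explicit in your base case is $T_{-1}=0$, needed at $n=0$, though it follows automatically from $T_k=k(k+1)/2$.
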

 %%%%%%%%%%%%%%%%%%%%%%%%%%%
 \begin{proof}
A direct calculation gives:
 \begin{align*}
   K^{[1]}(a,0)&=(-a + 1, 0)          & K^{[1]}(0,b)&=(2b + 1, b) \\
   K^{[2]}(a,0)&=(-a + 1, -2a + 3)    & K^{[2]}(0,b)&=(2b + 1, 3b + 3)\\ 
   K^{[3]}(a,0)&=(-3a + 6, -2a + 3)   & K^{[3]}(0,b)&=(4b + 6, 3b + 3) \\
   K^{[4]}(a,0)&=(-3a + 6, -4a + 10)  & K^{[4]}(0,b)&= (4b + 6, 5b + 10)\\ 
   K^{[5]}(a,0)&=(-5a + 15, -4a + 10) & K^{[5]}(0,b)&=(6b + 15, 5b + 10) \\
   K^{[6]}(a,0)&=(-5a + 15, -6a + 21) & K^{[6]}(0,b)&=(6b + 15, 7b + 21)
   % \dots & & \dots
 \end{align*}
From these, the general formulas can be deduced as linear 
functions in $a$ or $b$ with coefficients depending on 
even/odd numbers and triangular numbers of even/odd order.
  
Then the result follows by induction.
 \end{proof}

%%%%%%%%%%%%%%%%%%%%%%%%%%%%%%%%%%%%%%%%%%%%%%%%%%%%
% \section{Proof of Theorem~\ref{TheoremMain}}\label{SectionProofThMain}
\section{
\texorpdfstring{Arithmetic properties of the $K$-generated trajectories}{Arithmetic properties of K-generated trajectories}
}\label{SectionKTrajectories}

In this section, we look at the sequences of pairs of integers generated by the iterative application of the operators $K'$ and $K''$.

%%%%%%%%%%%%%%%%%%%%%%%%%%%%
\subsection{The Cycles}
First, let us note that a sequence becomes stationary upon repeated application of either of 
the operators $K'$ and $K''$ since, according to Lemma~\ref{LemmaInvolutionsGeneral}, 
both of them are involutions.

Starting with the pair $(a,b)$, we obtain the sequence of points
\begin{equation}\label{eqSeqab}
\begin{split}
    (x,y) \xleftrightarrow{\text{ $K'$ }} (-x+y+1,y)
      \xleftrightarrow{\text{ $K''$ }} & (-x+y+1,-x+2)
      \xleftrightarrow{\text{ $K'$ }} (-y+2,-x+2)
      \xleftarrow{\text{ $K''$ }}\\
       \xrightarrow{\text{ $K''$ }}& (-y+2,x-y+1)
       \xleftrightarrow{\text{ $K'$ }} (x,x-y+1)
       \xleftrightarrow{\text{ $K''$ }} (x,y),  
\end{split}
\end{equation}
while applying the same operators alternately starting with $K''$,
generates the same sequence of points  but in reverse order.

Thus, it can be seen that the pairs of integers that appear in the sequence~\eqref{eqSeqab}
form a cycle that typically consists of six points, arranged geometrically as the nodes of 
a closed twisted path.
In fact, with the notation introduced in~\eqref{eqPKL},
\begin{equation*}\label{}
    \# \scP_K(a,b)\in \{ 1,3,6\}\,,
\end{equation*}
since the paths only have three points in case one of the points 
lies on one of the lines 
$y = -x+2$,  %RED
$2y = x+1$,  %GREEN
$y = 2x-1$, and exactly one, meaning all six points coincide at~$(1,1)$, the intersection of all three lines.
Except for these cases, when some points appear overlapped repeatedly, the union of all cycles forms a disjoint partition of $\ZZ^2$.
It can be seen in Figures~\ref{FigureCycles0M} and~\ref{FigureCycles-MM} that the cycles
build the enlarging figures that tend to cover the plane.

%%%%%%%%%%%%%%%%%%%%%%%%%%%%%%%%%%%%%%%%%%%%%%%%%
\subsection{Cycles representing squares and cubes}
Let us observe that the definition of the operators $K'$ and $K''$ causes only $6$ out of the 
$12$ integers that are components of the pairs in $\scP_K(a,b)$
to be distinct.
Looking only at the arithmetic properties of the represented integers 
in a cycle, we denote the set of their absolute values by
\begin{equation*}\label{}
    \cR_K^{{|\cdot|}}(x,y):= \big\{ |m| : m\in \{a,b\},\ (a,b) \in\cP_K(x,y)\big\}\,.
\end{equation*}

%%%%%%%%%%%%%%%%%%%%%%%%%%%%%%%%%%%%%%%%%%%%%%%%%%%%%%%%%%%%
\begin{figure}[ht]
 \centering
 \mbox{
 \subfigure{
    \includegraphics[width=0.49\textwidth]{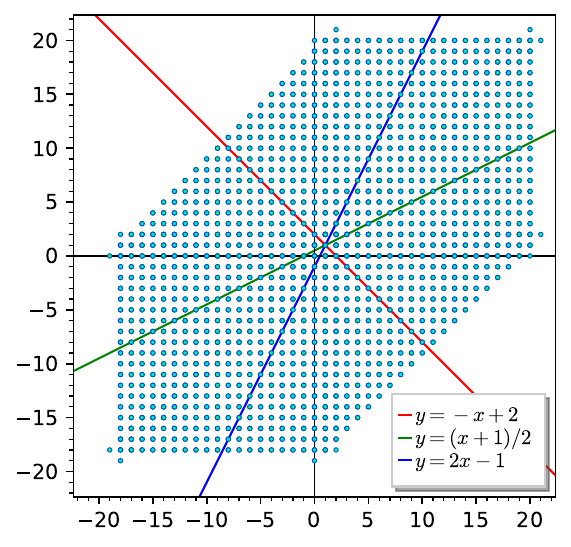}
    % {DotsOnParabolaK1K2.pdf}
 % \label{FigureDotsOnParabolaK1K2}
 }%\quad
 \subfigure{
    \includegraphics[width=0.486\textwidth]{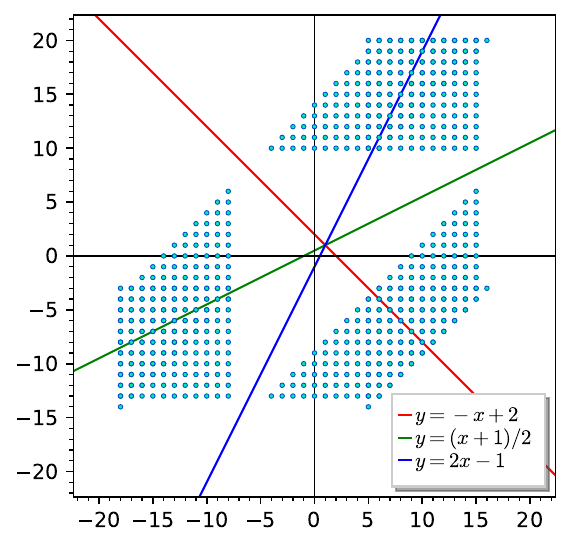}
 % \label{FigureBothLadders}
 }
 }\hfill\mbox{}
\caption{
The union of all $K$-generated cycles that have at least one node in \mbox{$[0,20]\times [0,20]$} (image on the left)
and at least one node in $[5,15]\times [10,20]$ (image on the right).
}
 \label{FigureCycles}
 \end{figure}

%%%%%%%%%%%%%%%%%%%%%%%%%%%%%%%%%%%%%%%%%%%%%%%%%%%%%%%%%%%%
\begin{figure}[ht]
 \centering
 \mbox{
 \subfigure{
    \includegraphics[width=0.49\textwidth]{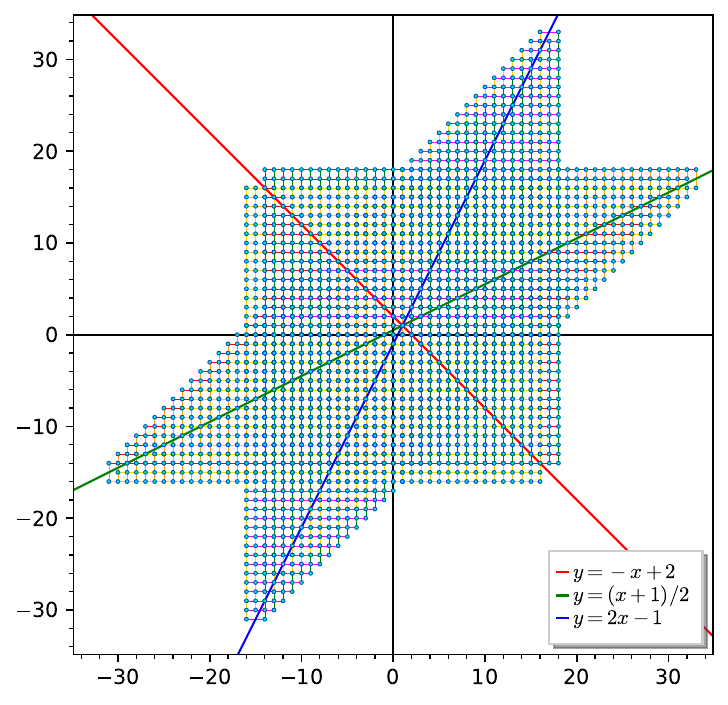}
    % {DotsOnParabolaK1K2.pdf}
 % \label{FigureDotsOnParabolaK1K2}
 }%\quad
 \subfigure{
    \includegraphics[width=0.486\textwidth]{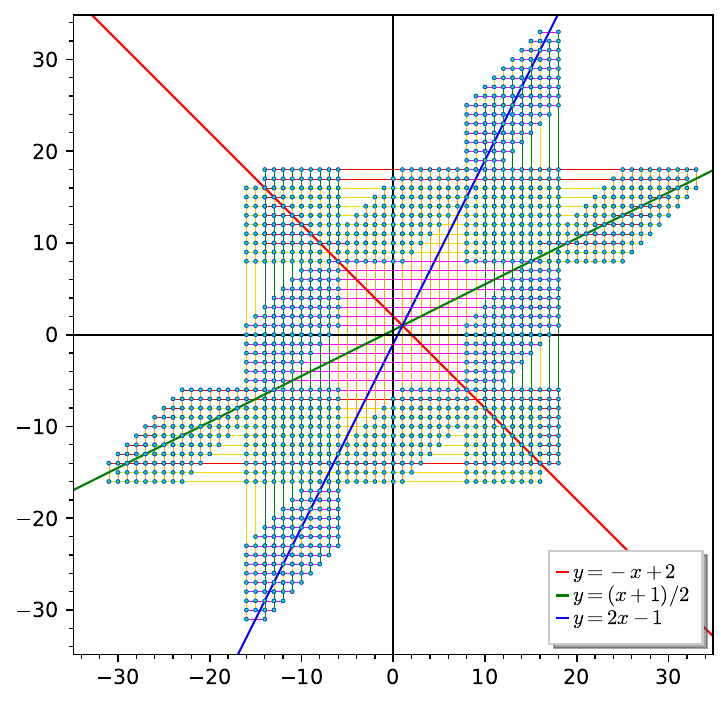}
 % \label{FigureBothLadders}
 }
 }\hfill\mbox{}
\caption{
All nodes and paths that connect them in the $K$-generated cycles that have at least one node in $[-16,16]\times [-16,16]$ (image on the left)
and at least one node in $\big([-16,-8]\cup [8,16]\big)\times \big([-16,-8]\cup [8,16]\big)$ (image on the right).
}
 \label{FigurePaths}
 \end{figure}

The natural numbers in $\RmK(x,y)$ can have, both as a set and in relation to each other, 
various interesting arithmetic properties.
First, let us note that depending on $x$ and $y$, some elements may be equal, causing $\RmK(x,y)$ to have fewer than six elements.
These special cases do occur, specifically on the axes of symmetry of the enlarging sets of points 
shown in Figures~\ref{FigureCycles0M},
\ref{FigureCycles-MM},
\ref{FigureCycles},
\ref{FigurePaths}.
For example
\begin{align*}
    \RmK(1,1)&=\{1 \}; & \RmK(3,3)&=\{1, 3\}; & \RmK(3,5)&=\{1, 3, 5 \};\\
    \RmK(3,2)&=\{0, 1, 2, 3 \}; & \RmK(-2,2)&= \{0, 2, 3, 4, 5 \}; & \RmK(5,6)&=\{0, 2, 3, 4, 5, 6 \}.
\end{align*}

A special type of sets are those that contain powers of the same order, such as:
\begin{align*}
    \RmK(51,36)&=\{14, 4^2, 34, 6^2, 7^2, 51 \}; \\
  \RmK(64, 144) &= \{62, 8^2, 79, 9^2, 142, 12^2\}; \\    
    \RmK(512, 729) &= \{6^3, 218, 510, 8^3, 727, 9^3\};\\
  \RmK(830584,1092727) &= \{262142, 64^3, 830582, 94^3, 1092725, 103^3\}.
\end{align*}

% 512 729 :   [216, 218, 510, 512, 727, 729] 3-power: 3
% 216 2^3 * 3^3
% 218 2 * 109
% 510 2 * 3 * 5 * 17
% 512 2^9
% 727 727
% 729 3^6

In general, following the definition, 
for $x> y\ge 2$, including the  possible 
multiplicities, $\RmK(x,y)$ can be expressed as:
\begin{equation}\label{eqzexpy1}
   \cR_K^{{|\cdot|}}(x,y)
   = \big\{x,x-2,y,y-2,z,z-2\big\}, \ \text{ where $z=x-y+1$}.
\end{equation}
Then, since in $\RmK(x,y)$ three elements are 
close, with just $2$ less than the other three, 
no more than three squares or higher powers can appear in $\RmK(x,y)$.

Cases with exactly three squares in $\RmK(x,y)$ are many. Indeed, if we put
$x=A^2$, $y=B^2$ and $z-2=C^2$, according to~\eqref{eqzexpy1}, they need to satisfy
condition $C^2+2=A^2-B^2+1$, that is,
$C^2+1 = (A-B)(A+B)$.
Following a modulo $4$ analysis and taking $A-B=1$, we obtain a particular infinite sequence of
$6$-tuples  $\RmK(x,y)$  containing three squares each.
Their generators are parameterized by the pairs
\begin{equation}\label{eqGSquare1}
\big(x_1(t),y_1(t)\big) = \big((2t^2+1)^2, (2t^2)^2\big),\ \text{ for integers $t\ge 1$,} 
\end{equation}
and the third element of  $\RmK\big(x_1(t),y_1(t)\big)$, which is also a square, is $z_1(t)=(2t)^2$.

Likewise, we obtain another parameterization of a family of generators for the sets  $\RmK(x,y)$  
that contain three squares each, taking this time
$x=A^2$, $y=B^2$ and $z=C^2$.
In that case, the resulting analogous parameterization is
\begin{equation}\label{eqGSquare2}
  \big(x_2(t),y_2(t)\big) = \big((2t)^2, (2t^2-1)^2\big),
\end{equation}
with the third square in  $\RmK\big(x_2(t),y_2(t)\big)$  being $z_2(t)=(2t)^2$.

%%%%%%%%%%%%%%%%%%%%%%%%%%%%%%%%%%%%%%%%%%
\begin{center}
\setlength\belowcaptionskip{12pt} % space above table!!!
\setlength\abovecaptionskip{12pt}% space below caption 
% PRINTS  THE DEFAULT valuers
% https://www.youtube.com/watch?v=GmXM8ssErB4
% \the\intextsep   
\setlength{\intextsep}{-2pt} % adjust space above and below float
% \setlength{\textfloatsep}{-3cm} %table!!!
% \raggedbottom
\begin{table}[thb]
 \centering
 \captionsetup{width=.91\linewidth}
  \caption{%\normalfont 
  The sets of integers represented by parameterization~\eqref{eqGSquare1}
  and~\eqref{eqGSquare2} for $2\le t\le 10$, where
  $ (x_1(t),y_1(t)) = \big((2t^2+1)^2, (2t^2)^2\big)$ and 
  $ (x_2(t),y_2(t)) = \big((2t)^2, (2t^2-1)^2\big)$.}
 \vspace{-5pt}
 \setlength{\tabcolsep}{3pt}
\renewcommand{\arraystretch}{1.21}
 \footnotesize
% \begin{tabular}{@{}cclccl @{}}
\begin{tabular}{@{}cccccc @{}}
\toprule 
%LEFT SQUARES
%RepresentedPrintAll((2*t^2+1)^2,(2*t^2)^2)
%RIGHT SQUARES
%RepresentedPrintAll((2*t^2)^2,(2*t^2-1)^2)
$t$    & $ \big(x_1(t),y_1(t)\big)$ & $\RmK\big(x_1(t),y_1(t)\big)$ 
       && $ \big(x_2(t),y_2(t)\big)$ & $\RmK\big(x_2(t),y_2(t)\big)$
%\\ %\hline\\
\\ \midrule
% 0 & (1,0)
% & $\{0, 1, 2\}$ &
%  & (0,1)
% & $\{0, 1, 2\}$ \\
% %
% 1 & (9,4)
% & $\{2, 4, 6, 7, 9\}$ &
%  & (4, 1)
% & $\{1, 2, 4\}$ \\
%
2 & $(81,64)$
& $\{4^2, 18, 62, 8^2, 79, 9^2\}$ &
 & $(64,49)$
& $\{14, 4^2, 47, 7^2, 62, 8^2\}$ \\
3 & $(361,324)$
& $\{6^2, 38, 322, 18^2, 359, 19^2\}$ &
 & $(324,289)$
& $\{34, 6^2, 287, 17^2, 322, 18^2\}$ \\
4 & $(1089,1024)$
& $\{8^2, 66, 1022, 32^2, 1087, 33^2\}$ &
 & $(1024,961)$
& $\{62, 8^2, 959, 31^2, 1022, 32^2\}$ \\
5 & $(2601,2500)$
& $\{10^2, 102, 2498, 50^2, 2599, 51^2\}$ &
 & $(2500,2401)$
& $\{98, 10^2, 2399, 49^2, 2498, 50^2\}$ \\
6 & $(5329,5184)$
& $\{12^2, 146, 5182, 72^2, 5327, 73^2\}$ &
 & $(5184,5041)$
& $\{142, 12^2, 5039, 71^2, 5182, 72^2\}$ \\
7 & $(9801,9604)$
& $\{14^2, 198, 9602, 98^2, 9799, 99^2\}$ &
 & $(9604,9409)$
& $\{194, 14^2, 9407, 97^2, 9602, 98^2\}$ \\
8 & $(16641,16384)$
& $\{16^2, 258, 16382, 128^2, 16639, 129^2\}$ &
 & $(16384,16129)$
& $\{254, 16^2, 16127, 127^2, 16382, 128^2\}$ \\
9 & $(26569,26244)$
& $\{18^2, 326, 26242, 162^2, 26567, 163^2\}$ &
 & $(26244,25921)$
& $\{322, 18^2, 25919, 161^2, 26242, 162^2\}$ \\
10 & $(40401,40000)$
& $\{20^2, 402, 39998, 200^2, 40399, 201^2\}$ &
 & $(40000,39601)$
& $\{398, 20^2, 39599, 199^2, 39998, 200^2\}$ \\
    \bottomrule
\end{tabular} \label{Table1}
\end{table}
\end{center}
% \vspace{-2mm}

%%%%%%%%%%%
% \medskip

For cubes, a family similar to parameterizations~\eqref{eqGSquare1} and~\eqref{eqGSquare2} is generated by
\begin{equation}\label{eqGCubes}
  \big(x_3(t),y_3(t)\big) = \big((9t^4+3t)^3, (9t^3+1)^3\big),
\end{equation}
% x = (3t+9t^4)^3
% y = (1+9t^3)^3
and the third cube in $\RmK\big(x_3(t),y_3(t)\big)$  is $z_3(t)=\big(9t^4\big)^3$.
This is implied by Mahler's solution~\cite{Mah1936} for the 
three cubes Diophantine equation
\begin{equation}\label{eqThreeCubes}
  x^3+y^3+z^3 = 1.
\end{equation}
%%%%%%%%%%%%%%%%%%%%%%%%%%%%%%%%%%%%%%%%%%
\begin{center}
\setlength\belowcaptionskip{12pt} % space above table!!!
\setlength\abovecaptionskip{12pt}% space below caption 
% PRINTS  THE DEFAULT valuers
% https://www.youtube.com/watch?v=GmXM8ssErB4
% \the\intextsep   
\setlength{\intextsep}{-2pt} % adjust space above and below float
% \setlength{\textfloatsep}{-3cm} %table!!!
% \raggedbottom
% \begin{center}
\begin{table}[thb]
 \centering
 \captionsetup{width=.91\linewidth}
  \caption{%\normalfont 
  The set of integers represented by parameterization~\eqref{eqGCubes} for $1\le t\le 10$, where
  $ (x_3(t),y_3(t)) =  \big((9t^4+3t)^3, (9t^3+1)^3\big)$.}
 \vspace{-5pt}
 \setlength{\tabcolsep}{-2pt}
\renewcommand{\arraystretch}{1.21}
 \footnotesize
% \begin{tabular}{@{}cclccl @{}}
\begin{tabular}{@{}ccc @{}}
\toprule 
% CUBES
%RepresentedPrintAll% RepresentedPrintAll((9*t^4+3*t)^3,(9*t^3+1)^3)
$t$    & $ \big(x_3(t),y_3(t)\big)$ & $\RmK\big(x_3(t),y_3(t)\big)$
%\\ %\hline\\
\\ \midrule
% 1 & (1728,1000)
% & $\{727, 729, 998, 1000, 1726, 1728\}$ \\
1 & $(1728, 1000)$
& $\{727, 9^3, 998, 10^3, 1726, 12^3\}$ \\
%
% 2 & (3375000,389017)
% & $\{389015, 389017, 2985982, 2985984, 3374998, 3375000\}$ \\
2 & $(3375000, 389017)$
& $\{389015, 73^3, 2985982, 144^3, 3374998, 150^3\}$ \\
%
% 3 & (401947272,14526784)
% & $\{14526782, 14526784, 387420487, 387420489, 401947270, 401947272\}$ \\
3 & $(401947272, 14526784)$
& $\{14526782, 244^3, 387420487, 729^3, 401947270, 738^3\}$ \\
%
% 4 & (12422690496,192100033)
% & $\{192100031, 192100033, 12230590462, 12230590464, 12422690494, 12422690496\}$ \\
4 & $(12422690496, 192100033)$
& $\{192100031, 577^3, 12230590462, 2304^3, 12422690494, 2316^3\}$ \\
%
% 5 & (179406144000,1427628376)
% & $\{1427628374, 1427628376, 177978515623, 177978515625, 179406143998, 179406144000\}$ \\
5 & $(179406144000, 1427628376)$
& $\{1427628374, 1126^3, 177978515623, 5625^3, 179406143998, 5640^3\}$ \\
%
% 6 & (1594232306568,7357983625)
% & $\{7357983623, 7357983625, 1586874322942, 1586874322944, 1594232306566, 1594232306568\}$ \\
6 & $(1594232306568, 7357983625)$
& $\{7357983623, 1945^3, 1586874322942, 11664^3, 1594232306566, 11682^3\}$ \\
%
% 7 & (10119744747000,29446377472)
% & $\{29446377470, 29446377472, 10090298369527, 10090298369529, 10119744746998, 10119744747000\}$ \\
7 & \, $(10119744747000, 29446377472)$\, 
& $\{29446377470, 3088^3, 10090298369527, 21609^3, 10119744746998, 21630^3\}$ \\
%
% 8 & (50194406979072,97908438529)
% & $\{97908438527, 97908438529, 50096498540542, 50096498540544, 50194406979070, 50194406979072\}$ \\
8 & $(36888^3, 4609^3)$
& $\{97908438527, 4609^3, 50096498540542, 36864^3, 50194406979070, 36888^3\}$ \\
%
% 9 & $(206173690790976,282558696328)$
% & $\{282558696326, 282558696328, 205891132094647, 205891132094649, 206173690790974, 206173690790976\}$ \\
9 & $(59076^3, 6562^3)$
& $\{282558696326, 6562^3, 205891132094647, 59049^3, 206173690790974, 59076^3\}$ \\
%
% 10 & $(729729243027000,729243027001)$
% & $\{729243026999, 729243027001, 728999999999998, 729000000000000, 729729243026998, 729729243027000\}$ \\
10 & $(90030^3, 9001^3)$
& $\{729243026999, 9001^3, 728999999999998, 90000^3, 729729243026998, 90030^3\}$ \\
    \bottomrule
\end{tabular} \label{Table2}
\end{table}
\end{center}
% \vspace{-2mm}

In addition, we mention that
Lehmer~\cite{Leh1956} obtained for~\eqref{eqThreeCubes} 
other types of recursively generated families of solutions. Many other types of solutions have been discovered by 
Payne and Vaserstein~\cite{PV1991}
(see~\cite[Table 2]{PV1991} for all solutions in a limited range and ~\cite[Theorem 1]{PV1991} for
infinitely many other families of solutions).
Payne and Vaserstein~\cite[Theorem 2]{PV1991}
also showed that there does not exist a finite set of
polynomial solutions that covers all integer solutions of~\eqref{eqThreeCubes}.

The first sets of integers generated by 
parameterizations~\eqref{eqGSquare1}, \eqref{eqGSquare2}
and~\eqref{eqGCubes}, which contain three squares and 
three cubes each, are given in Tables~\ref{Table1} and~\ref{Table2}.

%%%%%%%%%%%%%%%%%%%%%%%%%%%%%%%%%%%%%%%
%%%%%%%%%%%%%%%%%%%%%%%%%%%%%%%%%%%%%%%
\subsection{Cycles representing only primes}
Some other interesting sets of integers represented by cycles are:% $\RmK$
\begin{align*}
    % \RmK(7, 19)&=\{5, 7, 11, 13, 17, 19 \}; \\
	\RmK(13, 31)&=\{11, 13, 17, 19, 29, 31 \}; \\
  \RmK(103, 109) &= \{5, 7, 101, 103, 107, 109\}; \\    
    \RmK(601, 1033) &= \{431, 433, 599, 601, 1031, 1033\};\\
  \RmK(8431, 9859) &= \{1427, 1429, 8429, 8431, 9857, 9859\}.
\end{align*}
in which all six represented natural numbers are primes.

Since the elements in any $\RmK$ must verify condition~\eqref{eqzexpy1},
these sets can be expressed as the \textit{Goldbach-twin-prime} hex-tuples
\begin{equation}\label{eqsixprimes}
   \RmK(p,q)= \big\{p,q,p-q+1,p-2,q-2,p-q-1\big\}.
\end{equation}

By the works of Montgomery and Vaughn~\cite{MV1975}, Perelli and Pinz~\cite{PP1992}, and the more recent paper of Pintz~\cite{Pin2023}, we know that apart from
an exceptional set, the `Goldbach-prime' requirements on 
$z = p-q+1$ and $z-2=p-q-1$ are separately satisfied
infinitely often.
However, imposing the additional twin-prime condition makes it difficult 
to show the existence of infinitely many hex-prime triples~\eqref{eqsixprimes}.

An investigation in small ranges shows that there are actually plenty of 
sets $\RmK(x,y)$ that are composed of prime numbers only. 
Thus, let $\cP$ denote the set of prime numbers and, for any $T>1$, 
consider the set of pairs 
\begin{equation*}
    \cG(T):=\big\{\big(p,q) : 2\le p\le q\le T,\ \RmK(p,q)\subset\cP\big\}.
\end{equation*}
% (x,y) in [0..10000]^2. The prime only cycles generators are:
% Total Number of generators =  4120
% Total Number of unique represented sets of coordinates =  2064
Then, if $T>110$,  
\begin{equation*}
\cG(T) = \big\{(7, 13);
(13, 19);
(19, 31);
(31, 43);
(43, 61);
(61, 73);
(73, 103);
(103, 109);\dots\big\}.
\end{equation*}
where we chose to show only the first pair from the lexicographically sorted $\cG(T)$ so that the first components of the pairs are different.
If $T=10^5$, then $\cG(T)$ has $\#\cG(10^5)=4120$ elements.

Naturally,  there are many pairs of primes $(p,q)\neq (p',q')$ 
for which $\RmK(p,q)=\RmK(p',q')$, but still,
$\cG(10^5)$ contains $2064$ pairs $(p,q)$ 
that generate distinct sets of primes $\RmK(p,q)$.

We remark that for $T\le 10^5$ the first components 
of the pairs in $\cG(T)$ are the largest of the primes in the 
twin prime pairs that are $\ge 7$, 
% (they are the elements of 
% sequence~\mbox{\cite[{\href{https://oeis.org/A172057}{A172057}}]{oeis}}) 
and this is likely to hold for larger $T$.

%%%%%%%%%%%%%%%%%%%%%%%%%%%%%%%%%%%%%%%%%%%%%%%%%
% \subsection{The average length of path in cycles}
\subsection{\texorpdfstring{An $\ell^2$ average length of the steps of paths in $K$-generated cycles}{An L2 average length of the steps of paths in K-generated cycles}}

The size of steps in the path of a $K$-cycle generated by $(x,y)$ are, according to~\eqref{eqSeqab}, the terms of the following total length:
\begin{equation}\label{eqLengthPath}
   \length_K(x,y) := 
   |2x-y-1|+ |x+y-2| +|x-2y+1|+|2x-y-1|+|x+y-2|+|x-2y+1|.
   \end{equation}
In order to simplify the discussion on the signs of the six terms of
$\length_K(x,y)$, we introduce $S(x,y)$ defined to be the sum of their squares. 
This proves to be
\begin{equation*}
   S(x,y) = %12x^{2} - 12x y + 12y^{2} - 12x - 12y + 12 = 
   12(x^2+y^2-xy-x-y+1).
\end{equation*}
Then, for integers $T\ge 1$, we define the square-steps average by
\begin{equation}\label{eqAp}
   A_{step}(T) := \frac{1}{\sqrt{6}(2T+1)}\left(\sum_{x=-T}^T\sum_{y=-T}^T
                S(x,y)  \right)^{1/2}.
\end{equation}
The double sum above can be calculated exactly. Indeed, we have:
\begin{equation*}
  \begin{split}
   \sum_{x=-T}^T\sum_{y=-T}^T S(x,y)
   &= \sum_{x=-T}^T \Big(8T^3 + 12(2T + 1)x^2 + 12T^2 - 12(2T + 1)x + 28T + 12\Big)\\
   &= 4\big(8T^4 + 16T^3 + 22T^2 + 14T + 3\big)\\
   &= 4(2T+1)^2\big(2T^2 + 2T + 3\big).
  \end{split}
\end{equation*}
On inserting this in~\eqref{eqAp}, we obtain
\begin{equation*}%\label{eqAsFinal}
  \begin{split}
   A_{step}(T) &= \sdfrac{2}{\sqrt{6}}
          \left( 2T^2 + 2T + 3  \right)^{1/2}
          = \sdfrac{2}{\sqrt{3}}T
          \big( 1 + O(1/T)  \big)^{1/2} 
          = \sdfrac{2}{\sqrt{3}}T + O(1).
  \end{split}
\end{equation*}

%%%%%%%%%%%%%%%%%%%%%%%%%%%%%%%%%%%%%%%%%%%%%%%%%
% \subsection{The average length of path in cycles}
\subsection{\texorpdfstring{The average length of paths in $K$-generated cycles}{The average length of paths in K-generated cycles}}
Let $T\ge 1$, and define
\begin{equation*}%\label{eqAs}
   A_{path}(T) := \frac{1}{(2T+1)^2}\sum_{x=-T}^T\sum_{y=-T}^T
                \length(x,y).
\end{equation*}

Note that both 
$A_{step}(T)$ and $A_{path}(T)$ are slightly biased
being weighted sums, since each path is counted in the sums as many times 
as the number of its nodes that lie in $[-T,T]^2$.
Thus, the multiplicities of the appearance of these paths in the sums are $1,2,3,4,6$.

Due to the symmetries, the steps in a cycle may have only two different lengths, so that
\begin{equation*}%\label{eqLengthPath}
   \length_K(x,y) = 4|2x-y-1|+ 2|x+y-2|.
\end{equation*}
Further, it follows that
\begin{equation}\label{eqAint}
  \begin{split}
      A_{path}(T) &= \frac{1}{(2T+1)^2}\int_{-T}^T\int_{-T}^T
        \big(4|2x-y-1|+ 2|x+y-2|\big)\,\dd x\dd y +O(1)\\
      &= \frac{1}{4T^2}\int_{-T}^T\int_{-T}^T
        \big(4|2x-y|+ 2|x+y|\big)\,\dd x\dd y +O(1),
  \end{split}
\end{equation}
since on each unit square $u(x,y)\subset [-T,T]^2$, the size of
$\length(x,y)$ differs by at most a fixed independent, absolute constant from the double integral of
$\length(x,y)$ on $u(x,y)$.
Then, taking into account the signs of the terms, \eqref{eqAint} becomes
\begin{equation*}%\label{eqLengthPath}
  \begin{split}
      A_{path}(T) 
      =& \frac{1}{2T^2}\int_{-T/2}^{T/2}\left(\int_{-T}^{2x}
        4(2x-y)\,\dd y\right)\dd x
        + \frac{1}{2T^2}\int_{T/2}^{T}\left(\int_{-T}^{T}
        4(2x-y)\,\dd y\right)\dd x\\
        &+ \frac{1}{2T^2}\int_{-T}^{T}\left(\int_{-x}^{T}
        2(x+y)\,\dd y\right)\dd x  +O(1).
  \end{split}
\end{equation*}
Evaluating the integrals, we find
\begin{equation*}%\label{eqLengthPath}
  \begin{split}
      A_{path}(T) 
        =& \frac{17}{3}T  +O(1).
  \end{split}
\end{equation*}

One should remark the difference between the average
length of the path given as the six times the $\ell^2$ average of the steps
by \mbox{$6A_{steps}=\sdfrac{12}{\sqrt{3}}T + O(1)$} and the shorter average estimate 
$A_{path}(T)=\frac{17}{3}T  +O(1)$,
fact influenced by the varying lengths of the steps in the same path.
% 17/3. = 5.666
% 12./sqrt(3.) = 6.92820323027551

%%%%%%%%%%%%%%%%%%%%%%%%%%%%%%%%%%%%%%%%%%%%%%%%%%%%
\section{Proof of Theorem~\ref{TheoremMain}}\label{SectionProofThMain}
\subsection{One equivalence class}\label{subsectionTM1}
Let the pair $(a,b)\in\ZZ^2$ be fixed and suppose there exists $m\in\ZZ$
such that $m\le \min(x,y)$ for any $\scP(a,b)$. 
Suppose $m$ is the largest of these lower bounds.
Then, $m$ is a  component of a pair in $\scP(a,b)$,
which by the symmetry we may assume that it is the first, 
that is, there exists $u\in\ZZ$ such that $(m,u)\in\scP(a,b)$.

Then $L''(m,u)=(m,-u+2m+1)\in\scP_L(a,b)$, which implies
$m\le 2m-u+1$. Therefore $u\le m+1$. 
There are only two possibilities: either $u=m$ or $u=m+1$.

If $u=m$, then $(m,m)\in\scP_L(a,b)$, which implies that 
$\scP(a,b) = \scP(0,0) +m$.

If $u=m+1$, then we find that the pair $(m,m+1)$ is equivalent
with the pair $(m,m)$, because $L''(m,m+1)=(m,-(m+1)+2m+1)=(m,m)$.
Therefore, in this case, $(m,m)$ also belongs to $\scP(a,b)$.

In conclusion, any set of pairs $\scP(a,b)$ is equivalent with
$\scP(0,0)$ because they are the same modulo a translation by $m$
\begin{equation*}
    \scP(a,b) = \scP(0,0) +m,
\end{equation*}
where $m=\min\big\{\scR_L(a,b)\big\}$.
One can follow this relation over the parabolas represented in Figure~\ref{Figure17parabolas}.

%%%%%%%%%%%%%%%%%%%%%%%%%%%%%%%%%%%%%%%%%%%%%%%%%%%%%%%%%%%%
\begin{figure}[ht]
 \centering
    \includegraphics[width=0.69\textwidth]
    {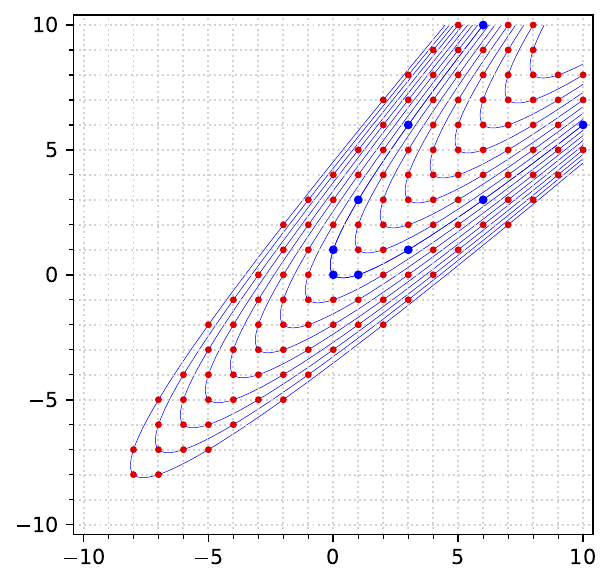}%{17parabolas.pdf}
\caption{
The parabolas of equations $x+y+2m = (x-y)^2$ for $m\in\{-8,-7,\dots,7,8\}$
and the highlighted lattice points
from $\scP_L(m,m)$ they pass through.
}
 \label{Figure17parabolas}
 \end{figure}

%%%%%%%%%%%%%%%%%%%%%%%%%%%%%%%%%%
\subsection{The existence of \texorpdfstring{$\min\big\{\scR_L(a,b)\big\}$}{min(R(a,b))}}\label{subsectionTM2}

Let $a,b\in\ZZ$. 
First, we show that there is a point in $\scP_L(a,b)$ 
that has both coordinates equal.
Then, we will observe that 
starting from that point and iteratively applying the operators 
in $\{L',L''\}$ alternatively, 
we will see that none of the points obtained in this way will 
have coordinates smaller than those of its predecessor.

Let $(x,y)\in\scP_L(a,b)$ and let $s=\abs{x-y}$ be the distance
between the coordinates. By the definition, we gave
\begin{equation}\label{eqLL}
    L'(x,y)=(-x+2y+1,y)\ \ \text{ and }\ \ 
    L''(x,y) = (x,2x-y+1).
\end{equation}
Then, the distances between the coordinates of 
the points obtained are
\begin{equation}\label{eqabs}
  % \begin{split}
    \abs{-x+2y+1-y} = \abs{(x-y)-1}
    \ \ \text{ and }\ \
    % \\ 
    \abs{2x-y+1-x} = \abs{(x-y)+1}.      
  % \end{split}
\end{equation}
This shows that, unless $x=y$, the distances between 
the coordinates of points $L'(x,y)$ and $L''(x,y)$ 
are equal to $s-1$ and $s+1$ respectively. 
By repeatedly applying the operators $L'$ and $L''$ 
on those points with coordinates that gradually approach each 
other, after a total of $s$ steps, we will ultimately reach a point in $\scP_L(a,b)$ that has both coordinates equal.

We should also note one more thing in regard to~\eqref{eqLL} and~\eqref{eqabs}.
By checking separately the cases $x>y$ and $x<y$,
we further find that when 
the components of the points move away from each other, 
their size does not decrease.

As a consequence, we have shown that there exists $m\in\ZZ$
such that \mbox{$(m,m)\in\scP_L(a,b)$} and
$m=\min\big\{\scR_L(a,b)\big\}$.

%%%%%%%%%%%%%%%%%%%%%%%%%%%%%%%%%%
\subsection{The ladder on \texorpdfstring{$\scP(0,0)$}{P(0,0)}}\label{subsectionTM3}
Note that since we know from Lemma~\ref{LemmaInvolutionsGeneral} 
that $L'$ and~$L''$ are involutions, in order to move out of the
$2$-length cycles, we must apply $L'$ and $L''$ alternatively.
Then, starting with $(0,0)$, 
and following the steps in  Lemma~\ref{LemmaLL},
we obtain the following sequences:
\begin{equation*}
    \begin{split}
    (0,0) &\xrightarrow{\text{ $L''$ }} (0,1)
      \xrightarrow{\text{ $L'$ }} (3,1)
      \xrightarrow{\text{ $L''$ }} (3,6)
       \xrightarrow{\text{ $L'$ }} (10,6)
       \xrightarrow{\text{ $L''$ }} (10,15)
       \xrightarrow{\text{ $L'$ }}\cdots,\\
    (0,0) &\xrightarrow{\text{ $L'$ }} (1,0)
      \xrightarrow{\text{ $L''$ }} (1,3)
      \xrightarrow{\text{ $L'$ }} (6,3)
       \xrightarrow{\text{ $L''$ }} (6,10)
       \xrightarrow{\text{ $L'$ }} (15,10)
       \xrightarrow{\text{ $L''$ }}\cdots.     
  \end{split}
\end{equation*}
Then, by induction for $k\ge 0$, using the formulas
\begin{equation*}
    \begin{split}
    L'(T_k,T_{k+1}) &=(-T_k+2T_{k+1}+1,T_{k+1})=(T_{k+2},T_{k+1}), \\
    L''(T_{k+1},T_k) &=(T_{k+1},-T_k+2T_{k+1}+1,)=(T_{k+1},T_{k+2}),
  \end{split}
\end{equation*}
where $T_k=k(k+1)/2$, we obtain the general expression
for the points in $\scP_L(0,0)$, as shown in Figure~\ref{FigureLadders}.

In particular, if we let $(x,y)=(T_k,T_{k+1})$ or
$(x,y)=(T_{k+1},T_k)$, points that are symmetric with respect to
$x=y$, we find that $x+y = (x-y)^2$, 
which is the equation of the parabola on which lie all points of $\scP_L(0,0)$.

%%%%%%%%%%%%%%%%%%%%%%%%%%%%%%%%%%
\subsection{The vertex of the parabola passing through  a given point}\label{subsectionTM4}

Let $(a,b)\in\ZZ$ be fixed.
On combining the results from Sections~\ref{subsectionTM1}
and~\ref{subsectionTM3} it follows that there exists
an integer $m$ such that $(a,b)\in\scP_L(a,b)$ and
$(a,b)=(T_k+m,T_{k+1}+m)$, for some $k\ge 0$.

On subtracting the coordinates of the point, it follows that
$b-a = k+1$, so that 
\begin{equation}\label{eqm1}
    k=b-a-1.
\end{equation}
Therefore, m is the solution of the equation
$a = T_k+m$, that is,
\begin{equation}\label{eqm2}
    m = a - T_k.
\end{equation}
Inserting the value of $k$ from~\eqref{eqm1} in~\eqref{eqm2},
we find
\begin{equation*}%\label{eqm2}
    m = a - \frac{(b-1)(b-a-1)}{2} 
     =\frac{(b+a)-(b-a)^2}{2}.
\end{equation*}
These concludes the proof of Theorem~\ref{TheoremMain}.

%%%%%%%%%%%%%%%%%%%%%%%%%%%%%%%%%%%%%%%%%%%%%%%%%%%%
\section{The densities of represented numbers in \texorpdfstring{$\scR_L(a,b) \pmod p$}{mod p} -- Proof of Theorem~\ref{TheoremLmodp}}\label{SectionLDensities}

Since we know from Theorem~\ref{TheoremMain}
that $\scP_L(a,b)=\scP_L(0,0)+m$, where 
\begin{equation*}
   m=\big(a+b-(a-b)^2\big)/2    
\end{equation*}
and
$(m,m)$
is the vertex of the parabola on which lie the lattice points 
in  $\scP_L(a,b)$, it is enough to find the densities
of the represented numbers in the residue classes $\pmod p$
only for the numbers in $\scR_L(0,0)$.

%%%%%%%%%%%%%%%%%%%%%%%%%%%%%%%%%%%%%%%%%%%%%%%%%
For a first approach to the subject of distribution of represented integers
on $\scP_L(0,0)$
in residue classes, we have included in 
Figure~\ref{FigureParabola-BarColor-mod17}
a graphical presentation of the densities in the specific case $p=17$.
Even more revealing for the densities are the four histograms of the
frequencies of the residues classes 
of the represented integers on the parabola $\cP(0,0)$
that are shown in Figure~\ref{FigureHistograms13101}.

%%%%%%%%%%%%%%%%%%%%%%%%%%%%%%%%%%%%%%%%%%%%%%%%%%%%%%%%%%%%
\begin{figure}[ht]
 \centering
 \subfigure{
    \includegraphics[width=0.69\textwidth]{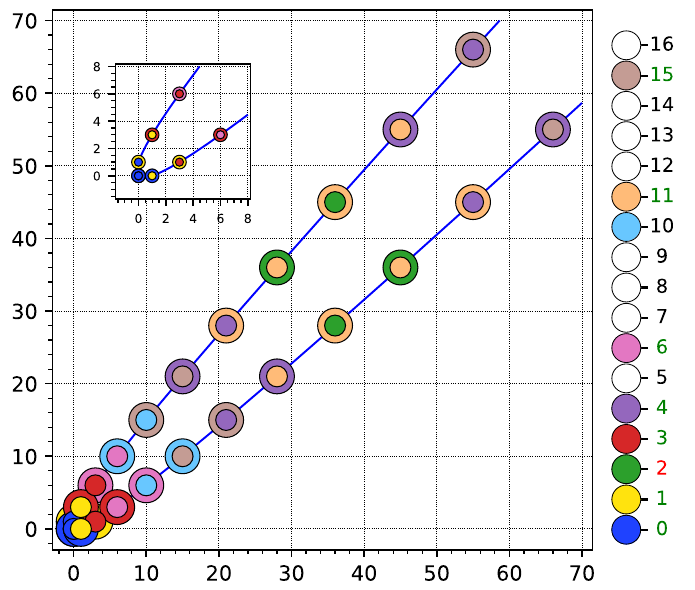}
 \label{FigureRC}
 }%\quad
\caption{
Starting from $(0,0)$, 
the points obtained by iterating $L'$ and~$L''$ in any order
are shown covered with bicolored discs.
The colors indicate belonging to the classes of representatives modulo $17$, 
so that the inner color corresponds to the first coordinate and 
the outer color corresponds to the second one.
The only classes with non-zero density are $0, 1, 2, 3, 4, 6, 10, 11, 15$ and $2$ 
appears with half the density compared to each of the other non-zero ones.
}
 \label{FigureParabola-BarColor-mod17}
 \end{figure}

In order to estimate the number $N_L(l,p;T)$ defined by~\eqref{eqN}, we use the parameterization
from the second part of Theorem~\ref{TheoremMain}, which 
shows that 
$\scR_L(0,0) = \{\frac{k(k+1)}{2} : k\ge 0\}$.
% $\scR_L(0,0) = \{k(k+1)/2 : k\ge 0\}$.

\smallskip
If $p=2$, as a result of the periodic repetition of
integers divisible by $4$, it follows that
\begin{equation*}
    \bigabs{N_L(0,2;T)-N_L(1,2;T)} \le 1,
\end{equation*}
for any $T>0$,  which implies that the limit density defined by~\eqref{eqLimitDensity} is
$\dens_L(0,2)=\dens_L(1,2)=\frac 12$.

\smallskip
If $p=3$, one finds that there are no integers $k$
for which $k(k+1)/2\equiv 2 \pmod 3$, while for the other residue classes, $0$ is about twice as frequent as $1$, and we have
\begin{equation*}
    \bigabs{N_L(0,3;T)-2N_L(1,3;T)} \le 1,
\end{equation*}
for any $T>0$.
Then $\dens_L(0,3)=\frac 23, \dens_L(1,3)=\frac 13$.
and $\dens_L(2,3)=0$.

A similar phenomenon occurs for $p=5$.
Indeed, there are no integers $k$
for which $k(k+1)/2\equiv 2 \text{ or } 4 \pmod 3$,
and out of the remaining three classes, 
one is represented half as much as the other two. The exact estimates are:
\begin{equation*}
    \bigabs{N_L(0,5;T)-N_L(1,5;T)} \le 1,
    \ \ \text{ and }\ \ 
    \bigabs{N_L(0,5;T)-2N_L(3,5;T)} \le 1,
\end{equation*}
for any $T>0$.
Then the densities are: $\dens_L(0,5)=\dd_L(1,5)=\frac 25$,
$\dens_L(3,5)=\frac 15$,
and $\dens_L(2,5)=\dens_L(4,5)=0$.

For any prime $p>2$, after completing the square, the congruence 
 $k(k+1)/2\equiv l \pmod p$
can be written in the equivalent form
\begin{equation}\label{eqpatrat}
    (k+2^{-1})^2\equiv 2l+2^{-2}\pmod p,
\end{equation}
where the inverses are taken modulo $p$.

With $x=k+2^{-1}$, congruence \eqref{eqpatrat} is an equation modulo $p$ 
in $x$, which has zero, one or two solutions, depending on
the constant term $2l+2^{-2}$.
The distinctions are as follows: there are zero solutions when
$2l+2^{-1}$ is a quadratic nonresidue $\pmod p$;
one solution when
$l=-2^{-3}$ (the representative of $-2^{-3}\pmod p$ in $[1,p-1]$);
and two solutions when $2l+2^{-2}$ is a non-zero quadratic residue
$\pmod p$.
Further, since $x=k+2^{-1}$ varies linearly with $k$ and the solutions
are distributed periodically in the $p$-length sub-intervals of  
of $[0,T]$, for $T\ge p$, it follows that
\begin{equation}\label{eqNLpl}
    \begin{cases}
  N_L(l,p;T) = 0, & \text{ if } \legendre[p]{2l+2^{-2}}=-1; \\[2mm]
  \bigabs{2N_L(l,p;T)-N_L(l_1,p;T)} \le 1, & 
  \text{ if } l=-2^{-3} \text{ and } 
  \legendre[p]{2l_1+2^{-2}}=1;\\[2mm]
  \bigabs{N_L(l_1,p;T)-N_L(l_2,p;T)} \le 1, & 
  \text{ if } \legendre[p]{2l_1+2^{-2}}=1  \text{ and } 
  \legendre[p]{2l_2+2^{-2}}=1.
    \end{cases}
\end{equation}
These estimates imply the limit values of the densities in 
relation~\eqref{eqdensLpl}, which concludes the proof of Theorem~\ref{TheoremLmodp}.

%%%%%%%%%%%%%%%%%%%%%%%%%%%%%%%%%%%%%%%%%%%%%%%%%%%%%%%%%%%%
\begin{figure}[ht]
 \centering
 \subfigure{
    \includegraphics[width=0.24\textwidth]{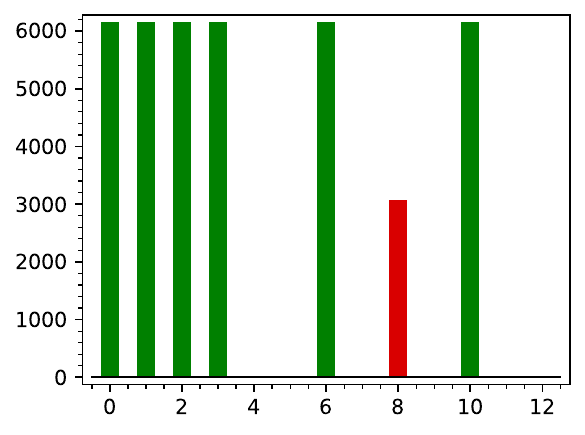}\
    \includegraphics[width=0.24\textwidth]{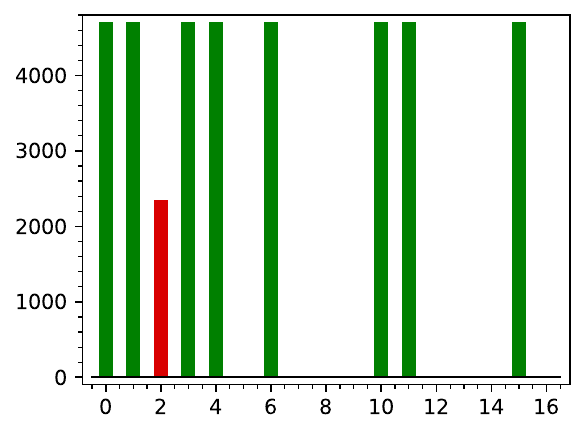} \   
        \includegraphics[width=0.24\textwidth]{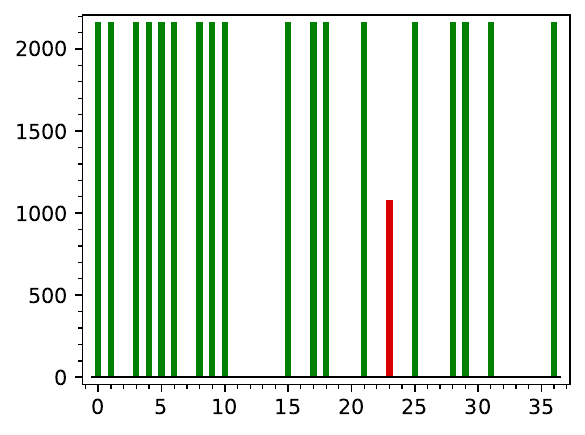}\
    \includegraphics[width=0.24\textwidth]{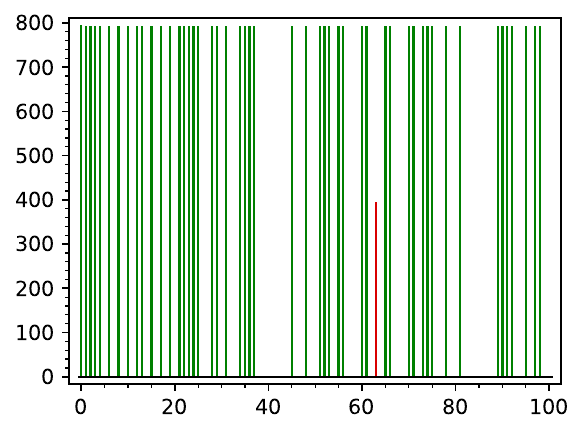} \   
 }%\quad
\caption{
The densities modulo $p$ of the represented integers 
on the parabola 
$\cP_L{(0,0)}$, which is started from $(0,0)$ 
and contains the points obtained by applying repeatedly the operators $L'$ and $L''$. The represented integers are taken modulo $p$ for 
$p=13, 17, 37,101$.
The histograms show the frequency of the residue classes of the coordinates of the vertex 
$(0,0)$ of the parabola and the nearest $10^4$ points to it.
}
 \label{FigureHistograms13101}
 \end{figure}
In Figure~\ref{FigureHistograms13101}, the frequencies 
$N_L(l,p;T)$ of the represented integers in $\scR_L(0,0)$
taken \mbox{modulo~$p$}
are shown for four values of $p$ and $0\le l<p$,
and $T=10^4$. 
The exceptional values, corresponding to the median densities, 
are highlighted in red. One checks that they are:
$l=8$ if $p=13$;
$l=2$ if $p=17$;
$l=23$ if $p=37$; and
$l=63$ if $p=101$,
and notes that $l\equiv -2^{-3}\pmod p$ in each of the four cases.
% p = 3 	 d = 1
% p = 5 	 d = 3
% p = 13 	 d = 8
% p = 17 	 d = 2
% p = 37 	 d = 23
% p = 101 	 d = 63

%%%%%%%%%%%%%%%%%%%%%%%%%%%%%%%%%%%%%%%%%%%%%%%%%%%%
\section{The parabolic-taxicab distance}\label{SectionDistance}
Besides the operators $K$ and $L$ defined by~\eqref{eqK} and~\eqref{eqL},
we consider a third operator that enables 
movement horizontally  and vertically towards the closest neighbors:
\begin{align}\label{eqM}
    M'(x,y)& = (x+1,y),& M''(x,y)& = (x-1,y),\\
    M'''(x,y)& = (x,y+1),& M^{iv}(x,y)& = (x,y-1).
\end{align}

% Let $P(x_1,y_1)$ and $Q(x_2,y_2)$ be two generic points with integer coordinates.
Denote $\cM:=\{M',M'',M''',M^{iv}\}$.
Then, the operators in $\cM$ can be used to define 
$\dM(P,Q)$,
the \textit{taxicab distance} between two points $P$ and $Q$, 
as the minimum length of the path that connects points $P$ and $Q$ 
by moving on the square grid that connects the lattice points with integer coordinates, that is,
\begin{equation}\label{eqDefinitionDM}
    \dM(P,Q) := \min\big\{ k\ge 0 : 
    \varphi^{[k]}(P)=Q,\,
    \varphi^{[k]} = \varphi_1 \circ\dots\circ\varphi_k,\,
    \varphi_j\in\cM,\, 1\le j\le k
    \big\}.
\end{equation}
Similarly, we define the \textit{parabolic-taxicab} distance distance $\dP(P,Q)$, which counts as units the successive application of operators from $\cL\cup\cM$ 
to generate a path of minimum length that connects $P$ and~$Q$:
\begin{equation}\label{eqDefinitionDP}
    \dP(P,Q) := \min\big\{ k\ge 0 : 
    \varphi^{[k]}(P)=Q,\, 
    \varphi^{[k]} = \varphi_1 \circ\dots\circ\varphi_k,\,
    \varphi_j\in\cL\cup\cM,\, 1\le j\le k
    \big\}.
\end{equation}
For example, the shortest path described in Figure~\ref{FigureTwoPaths} is
\begin{equation*}%\label{eqExample2}
  \begin{split}
	(5, 2)&\xrightarrow{\text{ $L''$ }} (5, 9)
    \xrightarrow{\text{ $L'$ }} 	(14, 9)
	\xrightarrow{\text{ $M^{iv}$ }} (14, 8)
	\xrightarrow{\text{ $M^{'}$ }} (15, 8)
	\xrightarrow{\text{ $L''$ }} (15, 23),
  \end{split}
\end{equation*}
so that $\dP\big((5, 2),(15, 23)\big)=5$.

In the next proposition we compare the parabolic-taxicab distance $\dP$
with the taxicab distance~$\dM$ and the Euclidean distance $\dE$.
%%%%%%%%%%%%%%%%%%%%%%%%%%%%%%%%%%%%%%%%%%%%%%
\begin{proposition}\label{PropositionDP}
  The function defined by~\eqref{eqDefinitionDP} has the
  following properties.
  \begin{enumerate}
      \item $\dP$ is a distance;
      \item $\dE(P,Q)\le \dM(P,Q)$;
      \item $\dP(P,Q)\le \dM(P,Q)$,
  \end{enumerate}  
  for any lattice points $P,Q\in\ZZ^2$.
\end{proposition}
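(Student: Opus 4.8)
The plan is to establish the three properties in order, since each is relatively self-contained. For property $(1)$, I would verify the three axioms of a metric. Symmetry and the identity of indiscernibles follow from the structure of the operator sets: every operator in $\cM$ has an inverse also in $\cM$ (indeed $M'$ and $M''$ are mutually inverse, as are $M'''$ and $M^{iv}$), and every operator in $\cL$ is an involution by Lemma~\ref{LemmaInvolutionsGeneral}, hence its own inverse. Thus any path from $P$ to $Q$ can be reversed step by step into a path of equal length from $Q$ to $P$, giving $\dP(P,Q)=\dP(Q,P)$. The value $\dP(P,Q)=0$ holds exactly when $P=Q$, since a nonempty composition of these operators starting at $P\ne Q$ cannot fix the relation. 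The triangle inequality is immediate: concatenating a minimal path from $P$ to $R$ with one from $R$ to $Q$ yields a (not necessarily minimal) path from $P$ to $Q$, so $\dP(P,Q)\le \dP(P,R)+\dP(R,Q)$.

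For property $(2)$, namely $\dE(P,Q)\le \dM(P,Q)$, I would observe that each taxicab step changes exactly one coordinate by $\pm 1$, so a path of length $k$ realizing $\dM(P,Q)$ traverses a polygonal curve whose Euclidean length is exactly $k$; since the straight-line distance $\dE(P,Q)$ is no longer than the length of any connecting curve, the inequality follows. Alternatively, writing $P=(p_1,p_2)$ and $Q=(q_1,q_2)$, one has $\dM(P,Q)=|p_1-q_1|+|p_2-q_2|$ and $\dE(P,Q)=\sqrt{(p_1-q_1)^2+(p_2-q_2)^2}$, and the inequality $\sqrt{u^2+v^2}\le |u|+|v|$ is elementary.

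Property $(3)$, $\dP(P,Q)\le \dM(P,Q)$, is the conceptual heart, though it is also short. The key point is that the parabolic-taxicab distance is defined using a strictly larger collection of moves, $\cL\cup\cM$, than the taxicab distance, which uses only $\cM$. Hence every path admissible for $\dM$ is also admissible for $\dP$, and in particular a minimal $\dM$-path from $P$ to $Q$ is one candidate in the minimization defining $\dP(P,Q)$. Taking the minimum over the larger family can only decrease (or preserve) the value, so $\dP(P,Q)\le \dM(P,Q)$.

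The main obstacle, modest as it is, lies in property $(1)$: one must be careful that the minimization in~\eqref{eqDefinitionDP} is over a nonempty set for every pair $P,Q$, i.e.\ that $\dP$ is finite and well-defined. This reduces to showing the square grid is connected under $\cM$ alone, which is clear since any two lattice points are joined by horizontal and vertical unit steps; finiteness of $\dP$ then follows a fortiori from property $(3)$, since $\dM$ is always finite. The remaining verifications are routine, so I would present property $(1)$ compactly and note that $(3)$ guarantees $\dP$ takes finite values.
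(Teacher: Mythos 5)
Your proposal is correct and follows essentially the same route as the paper: symmetry via the inverses in $\cM$ and the involutive property of $L'$, $L''$ from Lemma~\ref{LemmaInvolutionsGeneral}, the triangle inequality by concatenating minimal paths, property (2) as the elementary leg-versus-hypotenuse inequality, and property (3) by noting that the minimization for $\dP$ ranges over a strictly larger family of paths. Your added remark that $\dP$ is finite and well-defined (since the grid is already connected under $\cM$ alone) is a small point the paper leaves implicit, but it does not change the argument.
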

\begin{proof}
(1) The non-negativity and the identity conditions are both part of the definition
since, by convention, $\varphi^{[0]}=Id$. 

The symmetry follows since the inverses 
of $M', M'',M''',M^{iv}$ are in $\cM$
as well (since $(M')^{-1} = M''$ and
 $(M''')^{-1} = M^{iv}$)
and both $L'$ and $L''$ are involutions, 
according to Lemma~\ref{LemmaInvolutionsGeneral}.

The triangle inequality axiom is satisfied 
because any successive operators from $\cM\cup\cL$ 
applied to $P$ generate the nodes of a path 
that starts at $P$ and, in this specific case, ends at~$Q$, 
and, as defined, 
$\dP(P,Q)$ is given by the smallest number of steps required 
to go from $P$ to $Q$, while in $\dP(P,R)+\dP(R,Q)$ 
two minimum number of steps are added, which count also for paths from $P$ to $Q$,
but these are particular ones, as they additionally 
pass through~$R$, for any lattice points $P, Q, R$.

\smallskip
(2) This is the triangle inequality applied in a right triangle.

\smallskip
(3) The inequality follows because, while both sides calculate the minimum length 
of paths from $P$ to $Q$, the set of paths considered in calculating 
the minimum on the left side is larger, including the one used 
to calculate the minimum on the right side.
\end{proof}

Let us examine a few other examples that compare 
the distances between two points $P$ and $Q$, measured using $\dE, \dM$, and $\dP$.
Because all operators in $\cL\cup\cM$ are invariant under translations, we can choose $P=O=(0,0)$.

If $Q=(0,1)$, all distances are equal:
$\dE(O,Q)=\dM(O,Q)=\dP(O,Q)=1$.

If $Q=(1,1)$, we have:
$\dE(O,Q)=\sqrt{2}<\dM(O,Q)=\dP(O,Q)=2$,
because $O$ and $Q$ are neither adjacent on the lattice network
nor belong to the same parabola of those that partition $\ZZ^2$ as shown in Figure~\ref{Figure17parabolas}.

If $Q$ is farther away from the origin, than the parabolic-taxicab distance $\dP(O,Q)$ is smaller than $\dE(O,Q)$, 
because there are ladder steps on the parabolas that provides shorter paths. 
For example, if $Q=(3,6)$, 
then $\dE(P,Q)=\sqrt{6^2+3^2}=3\sqrt{5}\approx 6.7$
and \mbox{$\dM(O,Q)=3+6=9$}.
But $\dP(O,Q)=3$, because
\begin{equation*}%\label{eqExample2}
  \begin{split}
    (0,0) &\xrightarrow{\text{ $L''$ }} (0,1)
      \xrightarrow{\text{ $L'$ }} (3,1)
      \xrightarrow{\text{ $L''$ }} (3,6),
  \end{split}
\end{equation*}
so that $\big(L''\circ L'\circ L''\big)(0,0) = (3,6)$,
and one can check as well, case by case, that 
$(3,6)$ cannot be obtained from $(0,0)$
by applying fewer than $3$ operators from $\cL\cup\cM$.

%%%%%%%%%%%%%%%%%%%%%%%%%%%%%%%%%%%
\subsection{A way to farther away points}
In order to get a better understanding of the subject, let us evaluate the distance 
from the origin $O=(0,0)$ to a point $Y:=(2023,2024)$ located close but above the first diagonal.
The lattice points on the upper branch of the parabola with the minimum at $O$ have coordinates $(T_k,T_{k+1})$,
for any integer $k\ge 0$,
and the closest ones to~$Y$ are $Z:=(T_{62},T_{63})=(1953,2016)$ and $X:=(T_{63},T_{64})=(2016,2080)$.
Then, climbing the ladder in Figure~\ref{FigureLadders}, by following each pair of two steps up-right,
requires $2\cdot 63+1=127$ steps to reach from $O$ to $X$.
Next, taking the taxicab-lift down for $2080-2024=56$ steps, followed by 
$2023-2016=7$ steps to the right, we arrive at the destination $Y$.
All these intermediary steps, $\dP(O,X)\le 127$, $\dM\big(X,(2016,2024)\big)=56$
and \mbox{$\dM\big((2016,2024),Y\big)=7$},
add up to a total that shows that $\dP(O,Y)\le 127+56+7=190$.

Slightly different, someone could try a seemingly faster way 
by only going up to $Z$ instead of going all the way to $X$. 
But, then taking the taxicab to the right, we need to pay for 
\mbox{$2023-1953=70$} steps 
to reach $(2023,2016)$, followed by the lift-up for another \mbox{$2024-2016=8$} steps to reach the final destination $Y$.
The total then shows us that $\dP(O,Y)\le (2\cdot 62+1)+70+8=203$, which is more expensive compared to the previous choice.

However, it turns out that not even the first faster movement on the parabola 
to a point above followed by the taxicab-lift descent is the most efficient.
Indeed, there is an even less than three times shorter path from $O$ to $Y$
jumping and climbing on the nearby parabolas, which makes the parabolic-taxicab 
distance equal to just $\dP(O,Y)=83$.

%%%%%%%%%%%%%%%%%%%%%%%%%%%%%%%%%%%
\subsection{The \texorpdfstring{$\dP$}{dP}-ball}
For any lattice point $C=(a,b)$ and any integer $r\ge 0$
the \textit{parabolic-taxicab closed ball} is the set of lattice points that are at a distance at most~$r$ from $C$, that is,
\begin{equation*}
    \scBpc(C,r) :=\big\{
    X\in\ZZ^2 : \dP(C,X) \le r
    \big\}.
\end{equation*}
Then, the cardinality of $\scBpc(C,r)$ is by definition 
the \textit{measure} of the ball, which we denote by
$\mu\big(\scBpc(C,r)\big) = 
    \#\scBpc(C,r)$.

In Figure~\ref{FigureBalls}, are shown two balls of radius 
$10$ and $17$ both centered at the origin.
Although, one can identify some similarities with the shape 
of the \textit{swallow} (the set of points put together 
by the support of the distribution function
of the neighbor denominators of 
the Farey fractions~\cite{ABCZ2001,CVZ2010}),
the `wings' of the balls almost identify with the `tail', 
as if this were due to the superior speed on 
the parabolic trajectories.
On the other hand, balls for different radii 
have related shapes,
but they do not scale as in the Euclidean geometry.
Thus, in the case of geometry given by the distance~$\dP$, 
the analogue of $\pi$ is not an absolute constant, 
but it turns out to be dependent on $r$.

The step-by-step iterative calculation of the area of $\scBpc((O,r)$ for 
$r=0,1,2,3,\dots$ yields the sequence:
\begin{equation}\label{eqAB}% The area of of radius 0,1,2,3,4,... the bal
    1, 5, 15, 37, 75, 135, 221, 339, 493, 689, 931, 1225, 1575, 1987, 2465, 3015, 3641, 4349,\dots
\end{equation}
The gaps between neighbors elements of this sequence are:
\begin{equation*}
   4, 10, 22, 38, 60, 86, 118, 154, 196, 242, 294, 350, 412, 478, 550, 626, 708,\dots 
\end{equation*}
Next, the sequence of the gaps of the gaps of sequence~\eqref{eqAB} is:
\begin{equation*}
   6, 12, 16, 22, 26, 32, 36, 42, 46, 52, 56, 62, 66, 72, 76, 82,\dots 
\end{equation*}
Finally, the gaps of this last sequence alternate: 
$6, 4, 6, 4, 6, 4\dots$
This analysis then leads to the closed form formula~\eqref{eqAreaBall} in Conjecture~\ref{ConjectureBallArea}.

\begin{figure}[ht]
 \centering
  \includegraphics[width=0.24\textwidth]{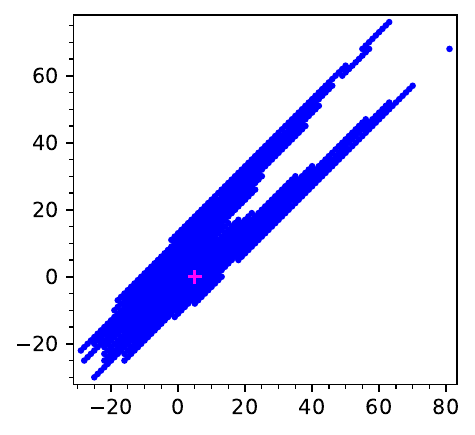}
  \includegraphics[width=0.24\textwidth]{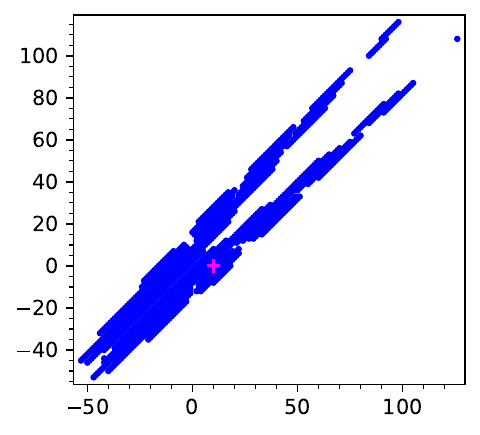}
  \includegraphics[width=0.24\textwidth]{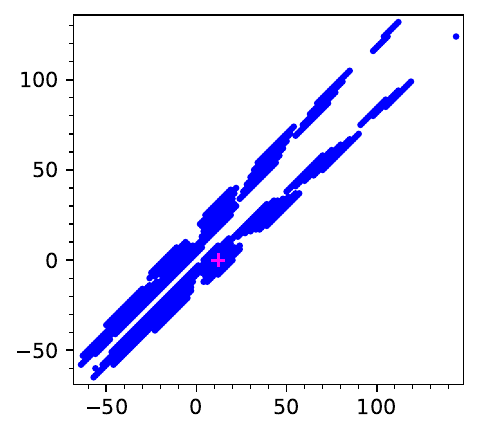}
  \includegraphics[width=0.24\textwidth]{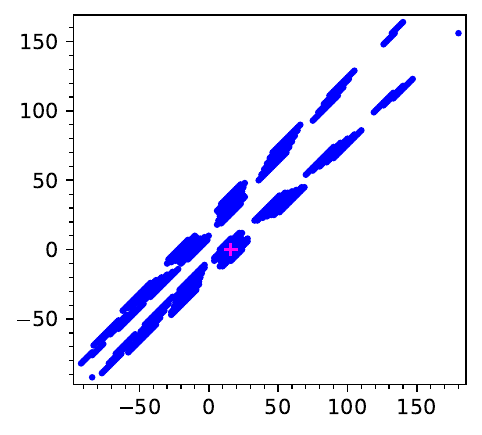}
\caption{
Parabolic-taxicab balls with centers off the principal diagonal. 
From left to right, their areas are:
$\mu\big(\scBpc((5,0),8)\big) = 1412$;
$\mu\big(\scBpc((10,0),8)\big) = 2250$;
$\mu\big(\scBpc((12,0),8)\big) = 2339$;
$\mu\big(\scBpc((16,0),8)\big) = 2361$.
}
 \label{FigureOffDiagonalBalls}
 \end{figure}

Balls that are not centered on the first diagonal are not symmetrical, 
and for small radii they are made up of 
clusters of points with a discontinuous appearance
(see Figure~\ref{FigureOffDiagonalBalls}).
As the radius increases, these clusters tend to merge into 
each other while new clusters form at the expanding ends 
in a direction parallel with the first diagonal.

%%%%%%%%%%%%%%%%%%%%
\subsection*{Acknowledgements}
\bigskip
% \noindent
% \textbf{Acknowledgements.}
The authors acknowledge the essential role 
of their children Bianca, Ștefan and Dan,
dating back to the summer of 1997, 
for the intuitive description of the
parabolic-taxicab distance using the elevator
in the Wayside School.
Also, they thank Evghenia Obcinikova for fruitful discussions and for the link up to the bamboozle.

% \bibliographystyle{plainurl}% shows urls
% \bibliography{parparbib}% common bib file

% \end{document}
%%%%%%%%%%%%%%%%%%%%%%%%%%%%%%%

\end{document}